\documentclass{article}
\usepackage[a4paper, total={6in, 8.5in}]{geometry}
\usepackage{graphicx}
\usepackage{amsmath,amsthm,amsfonts,amssymb,amscd}
\usepackage[section]{algorithm}
\usepackage{algorithmicx}
\usepackage{algpseudocode}
\usepackage{xcolor}
\usepackage{subfig}
\usepackage{multirow}
\usepackage{diagbox}
\newtheorem{theorem}{Theorem}
\newtheorem{lemma}{Lemma}

\title{Applying acceleration to Krylov subspace eigenvalue solvers}
\author{Michelle Baker, Sara Pollock}

\begin{document}
\maketitle

\begin{center}
    {\bf Abstract}
\end{center}

    In this paper, we apply acceleration to the inverse-free preconditioned Krylov subspace method introduced by Golub and Ye, which solves the symmetric generalized eigenvalue problem for the algebraically smallest eigenvalue. As the method is an improvement on steepest descent, we consider acceleration based on Nesterov accelerated steepest descent and Polyak's heavy-ball method. We extend acceleration to the block version of the Krylov subspace method and prove convergence for a more generalized choice of subspace. We present numerical results demonstrating the effect of fixed and safeguarded-adaptive choice of the momentum parameter, which show convergence in fewer outer iterations compared with LOBPCG with the same subspace size and generally fewer iterations than the base method when solving for multiple clustered eigenvalues with small dimension size. We also provide an explanation for the acceleration seen from implementing Polyak's heavy-ball method, including justifying the given parameter range.

\section{Introduction}

    Given matrices $A, B\in\mathbb{C}^{n\times n}$, the generalized eigenvalue problem is to find $\lambda\in\mathbb{C}$ and $x\in\mathbb{C}^n, x\not=0$ such that $Ax=\lambda Bx$, also known as the pencil problem for linear pencil (A, B). The main problem we consider is solving for the eigenvalues of the pencil $(A,B)$ where $A$, $B$ are symmetric, sparse, and large, and $B$ is positive definite. Such problems arise in dynamic analysis in structural engineering \cite{Grimes_1986} and electronic structure calculations \cite{Hoshi_2012}.

    Many methods exist which solve this pencil problem, such as gradient type methods which are attractive in that they do not require the computation of any matrix inverse. Steepest descent is a well known gradient type method which, while reliable, has very slow convergence. Popular gradient type methods include LOPCG and its block version LOBPCG of \cite{Knyazev_2000}, which extend the search space of steepest descent to include the previous approximate eigenvector, allowing for significantly faster convergence in practice. LOPCG and LOBPCG use the Rayleigh-Ritz procedure to choose the locally optimal steepest descent step size.

    In 2002, Golub and Ye introduced their competitive inverse-free Krylov subspace iterative method \cite{Golub_Ye_2002}, a gradient type method which solves for extreme eigenvalues of our stated problem. At each step, a Krylov subspace is formed using the current eigenvector approximation and used in the Rayleigh-Ritz projection method to output a better approximation to the pencil's algebraically smallest (or largest) eigenvalue. Small subspace size is recommended as the method takes advantage of multiple solves on much smaller pencils instead of solving the large problem directly. Notably, for Krylov subspace of dimension size 2, the method is equivalent to steepest descent with locally optimal step size and a locally optimally chosen parameter on the current iterate.
    
    The method was proven to always converge to the extreme eigenpair at least linearly. However, the algorithm may perform poorly when eigenvalues are tightly  clustered. The original paper explores accelerating the method using preconditioning, and a second paper presented by Quillen and Ye in 2010 tackles the poor convergence caused by clustered eigenvalues by introducing a block version of the method \cite{Quillen_Ye_2010}. A black box implementation of the block inverse-free Krylov subspace method called BLEIGIFP is made available by Ye, which includes the previous block of approximate eigenvectors in each step's subspace and adaptively chooses the block size number of eigenvalues to solve for. With the inclusion of the previous block, this implementation is equivalent to LOBPCG when the Krylov subspace dimension is set at 2 and block size is fixed. In this paper, we consider another means of acceleration based on the method's relation to the well known method of steepest descent.
    
    Recent works implementing extrapolation for eigenvalue problems have shown a reduction in the number of iterations at low cost \cite{Pollock_ChristianAustin_Zhu_2024, Pollock_Scott_2021, Pollock_Nigam_2022, De_Sa_2018, Rabbani_2022, Bai_2021}. \cite{Pollock_Scott_2021} in particular explores extrapolating a restarted $k$-step Arnoldi algorithm which is equivalent to Golub and Ye's method for the standard eigenvalue problem ($B=I$). A recently popular method of momentum-type extrapolation is Nesterov accelerated gradient descent, an extrapolation modification accelerating steepest descent \cite{Nesterov_1983, Walkington_2023}. Another popular method used often in accelerating steepest descent is Polyak's heavy-ball method \cite{Polyak_1964}. We consider applying acceleration in the style of the heavy-ball method to the inverse-free Krylov subspace method as in \cite{Pollock_Scott_2021} for the generalized eigenvalue problem, which we will call the Depth-1 method, as well as acceleration inspired by generalizing Nesterov accelerated steepest descent and Polyak's heavy-ball method. The accelerated methods introduce acceleration parameter $\beta_k$ at each step $k$.
    
    For all three accelerated methods, we observed faster convergence for good choices of fixed $\beta_k=\beta$ with small subspace dimension size. When the subspace dimension size is set at 2, that is, the base method is equivalent to LOBPCG, and when solving for multiple clustered eigenvalues with good choice of fixed $\beta$, the Depth-1 and Nesterov-like accelerated methods converged on average in $62\%-82\%$ of LOBPCG's total iterations, and the heavy-ball-like accelerated method converged on average in $61\%-78\%$ of LOBPCG's total iterations in our tests. When eigenvalues were tightly clustered, the accelerated methods performed substantially better than the base method, converging in $33\%$ of LOBPCG's total iterations on average. The heavy-ball-like method was in particular more stable than LOBPCG and the other accelerated methods with respect to the initial vector when solving for extremely clustered eigenvalues. When the subspace dimension size is set at 3, we also saw significant acceleration while solving for clustered eigenvalues, but when the dimension size is around 6 or higher, there is much less benefit from acceleration.

    The remainder of the paper is organized as follows. In section 2, we discuss the generic eigensolver and inverse-free Krylov subspace methods introduced by Quillen, Golub, and Ye. We also discuss LOPCG and show numerical results studying the effect of fixing one of its parameters to give motivation for our presently introduced methods. In section 3 we discuss generalizing the Nesterov and heavy-ball methods, and we present our accelerated version of the inverse-free Krylov subspace single vector method along with analysis of extrapolation. We present numerical results for the single vector method in section 4, comparing it to the original method. In section 5, we extend acceleration to the block method and present analysis proving convergence for our methods. Numerical results for the block accelerated methods are presented in section 6.

\section{Background}
    The algebraically smallest eigenvalue of the pencil $(A,B)$ can be found by minimizing the Rayleigh quotient
    \begin{align}\label{eqn:rayquot}
      \rho(x)\equiv\frac{x^TAx}{x^TBx},
    \end{align}
    for all vectors $x$ in our subspace. One such strategy in minimizing the Rayleigh quotient is to perform the Rayleigh-Ritz projection method on a subspace of our original space. The smallest eigenvalue of the subspace is obtained and given as an approximation to the smallest eigenvalue of $(A,B)$. This process may be repeated iteratively to give a new, better approximation at each step.

    \begin{algorithm}[h!]
    \caption{Generic Eigensolver}
    \textbf{Input:} Symmetric $A\in\mathbb{R}^{n\times n}$, S.P.D. $B\in\mathbb{R}^{n\times n}$.
    \begin{algorithmic}[1]
        \For{$k=0,1,2,\dots$}
            \State Construct a subspace $S^{(k)}$ of dimension $m\ll n$ from $p$ approximate eigenpairs\newline\phantom{ww}$(\rho_1^{(k)},x_1^{(k)}),\dots,(\rho_p^{(k)},x_p^{(k)})$.
            \State Perform Rayleigh-Ritz on $(A,B)$ with respect to $S^{(k)}$ to extract the $p$ desired approximate\newline\phantom{ww}eigenpairs $(\rho_1^{(k+1)},x_1^{(k+1)}),\dots,(\rho_p^{(k+1)},x_p^{(k+1)})$.
        \EndFor
    \end{algorithmic} 
    \end{algorithm}
    
    In \cite{Quillen_Ye_2010}, Quillen and Ye describe a generic scheme for solving the $p$ smallest eigenvalues of the pencil $(A,B)$, presented as Algorithm 2.1. The scheme performs the Rayleigh-Ritz projection method to gain new approximate eigenpairs $(\rho_i^{(k)},x_i^{(k)})$, ${1\leq i\leq p}$ 
    at each step, where $\rho_i^{(k)} = \rho(x_i^{(k)})$, as given by \eqref{eqn:rayquot}, and the process is repeated iteratively until convergence criteria is met. The generic eigenvalue solver is shown to converge to eigenpairs of $(A,B)$ when the Ritz vectors ${x_1^{(k)},\dots,x_p^{(k)}}$ with ${||x_i^{(k)}||_B=1}$, where $||x||_B=\sqrt{x^TBx}$, and residuals ${(A-\rho_i^{(k)}B)x_i^{(k)}}$, ${1\leq i\leq p}$ are contained in $S^{(k)}$ \cite{Quillen_Ye_2010}. Locally optimal steepest descent and LOPCG of \cite{Knyazev_2000} can be described as the generic eigensolver for the single smallest eigenvalue with subspaces $\text{span}\{x^{(k)},(A-\rho^{(k)}B)x^{(k)}\}$ and ${\text{span}\{x^{(k)},x^{(k)}-x^{(k-1)},{(A-\rho^{(k)}B)x^{(k)}}\}}$ at step $k$, respectively.

    \subsection{Inverse-free Krylov subspace method}
    
    Introduced in 2002 by Golub and Ye in \cite{Golub_Ye_2002}, the inverse-free preconditioned Krylov subspace method is a variant of the generic eigensolver using the Krylov subspace
    \[K_m\equiv \text{span}\{x^{(k)}, (A-\rho^{(k)}B)x^{(k)},\dots,(A-\rho^{(k)}B)^mx^{(k)}\}.\] For $m=1$, the method is essentially steepest descent,  with an extra parameter on $x^{(k)}$ that is chosen locally optimally at each step. In their paper, Golub and Ye proved the algorithm converges at least linearly and convergence is affected by the spectral distribution of $A-\rho^{(k)}B$. A preconditioning scheme is presented to separate the eigenvalues: a preconditioner matrix $L_k$ is chosen to give a favorable distribution for $L_k^{-1}(A-\rho_kB)L_k^{-T}$. The algorithm is then applied to the pencil
    \[(\hat{A},\hat{B})\equiv(L_k^{-1}AL_k^{-T},L_k^{-1}BL_k^{-T}),\]
    which has the same eigenvalues as $(A,B)$.  With ideal preconditioning, the convergence is quadratic. However, the ideal preconditioner $L_k=L$ from the complete $LDL^*$ factorization  of $A-\lambda_1 B$ requires high computational cost and knowledge of the very eigenvalue we are looking for. The authors recommend an using an incomplete $LDL^*$ factorization \cite{Gimenez_2003} of $A-\hat\lambda_1 B$ where $\hat\lambda_1$ is an approximation of $\lambda_1$, which requires either a lower bound on $\lambda_1$ or running a few iterations of the method without preconditioning.
    
    Golub and Ye also observed from numerical tests that the convergence rate decreases rapidly as $m$ increases. Ye provides a black-box implementation called EIGIFP, which performs competitively with the Jacobi-Davidson QR and Jacobi-Davidson QZ methods of \cite{Vandervorst_1998}  and shift-and-invert Lanczos algorithms for small $m$. However, the algorithm may perform poorly and oscillate when eigenvalues are severely clustered. A block version of the method, described next, has been introduced to handle the clustered eigenvalues.

    \subsection{Block inverse-free Krylov subspace method}
    
    In 2010, Quillen and Ye introduced a block generalization of the inverse-free Krylov subspace method of \cite{Golub_Ye_2002} along with a black-box implementation \cite{Quillen_Ye_2010}. The subspace $S^{(k)}$ used in Algorithm 2.1 at step $k$ for the block Krylov subspace method is generated as shown in \cite{Quillen_2005}: the Krylov subspaces $K_{m,1},\dots,K_{m,b}$ are generated from the current Ritz eigenpairs $\left(\rho_1^{(k)}, x_1^{(k)}\right),\dots,\left(\rho_b^{(k)}, x_b^{(k)}\right)$ and then combined as subspace 
    \[S^{(k)}=\sum_{i=1}^bK_{m,i}.\]
    $S^{(k)}$ is then orthonormalized. Preconditioning may be applied in the same manner as before.
    
    A variation is also given, where an Arnoldi-like process is used to construct a basis for a block Krylov-like subspace that differs from subspace $S$. Quillen and Ye provide BLEIGIFP, black-box implementation of this variation with the option of preconditioning. In this implementation, converged eigenvectors are deflated from the problem to avoid unnecessary computation, and adaptive choice of block size is available for when the algorithm detects a cluster size larger than the number of eigenvalues requested by the user. Inspired by LOBPCG of \cite{Knyazev_2000}, the implementation also includes the previous block of approximate eigenvalues into the current step's subspace. The authors note there is no theory proving acceleration, but it is observed in practice. This implementation performs competitively with existing algorithms and gives a good starting approximation to the next smallest eigenpair.
    
    In this paper, we are interested in accelerating the single and block Krylov subspace methods using extrapolation inspired by Polyak's heavy-ball method and Nesterov accelerated gradient descent. That is, we are interested in the effect of the inclusion of the previous block of approximate eigenpairs and extending its inclusion to the search directions $(A-\rho^{(k)}B)^jx_i^{(k)}$ for $i=1,\dots,b$, $j=1,\dots,m$. For motivation, we will look at two small examples using the LOPCG method for the single smallest eigenvector, equivalent to the block Krylov method when the previous iterate is included and Krylov parameter is fixed at $m=1$.

    \subsection{Locally Optimal Preconditioned Conjugate Gradient method}
    In this section, we introduce a variation of the subspace used in Algorithm 2.1 for steepest descent with the aim to gain understanding on the effect of adding the previous vector to steepest descent, resulting in LOPCG of \cite{Knyazev_2000}, and its correlation to Polyak's heavy-ball method of \cite{Polyak_1964}.
    
    In 1964 \cite{Polyak_1964}, Boris Polyak introduced a method using previous iterations to speed up the convergence of iterative methods, including steepest descent. Applying the heavy ball method to steepest descent gives the form
    \[x^{(k+1)}=x^{(k)}+\beta_k(x^{(k)}-x^{(k-1)})-\tau_k\nabla f(x^{(k)}),\]
    for parameters $\tau_k$ and $\beta_k$. For our choice of $f(x)$ as the Rayleigh quotient $\rho(x)$, we have
    \[x^{(k+1)}=x^{(k)}+\beta_k(x^{(k)}-x^{(k-1)})-\tau_k(A-\rho^{(k)}B)x^{(k)}.\]
    Meanwhile, LOPCG performs the Rayleigh-Ritz projection method at step $k$ using the subspace \text{span}$\{x^{(k)},x^{(k)}-x^{(k-1)},{(A-\rho^{(k)}B)x^{(k)}}\},$ which is equivalent to the iterative method \begin{equation}x^{(k+1)}=\gamma_kx^{(k)}+\beta_k(x^{(k)}-x^{(k-1)})+\tau_k{(A-\rho^{(k)}B)x^{(k)}},\end{equation} with locally optimal choice of $\gamma_k,\beta_k,\tau_k$. We can see this agrees with the heavy-ball method with an extra parameter on $x^{(k)}$. We note that the acceleration theory proven for the heavy-ball method in \cite{Polyak_1964} assumes convexity for the function being minimized \cite{Zavriev_Kostyuk_1993}, which in our case is the non-convex Rayleigh quotient.

    \begin{figure}
        \centering
        \includegraphics[width=0.49\textwidth]{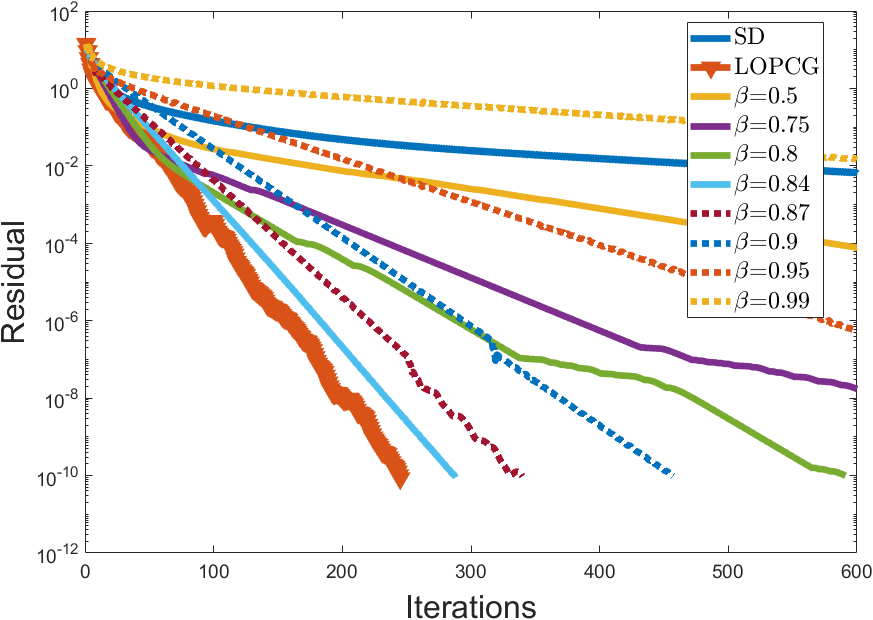}
        \includegraphics[width=0.49\textwidth]{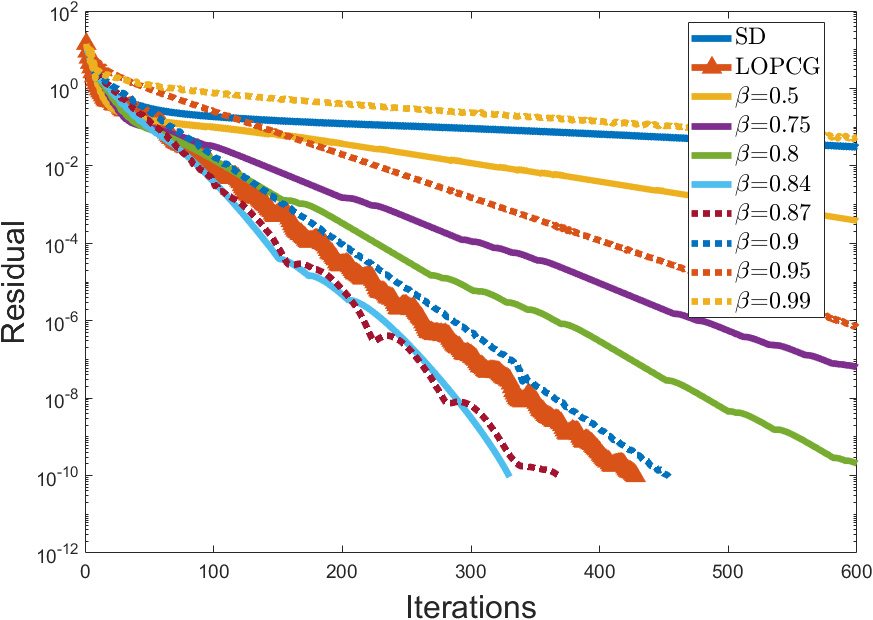}
        \caption{Residual convergence solving for the smallest eigenvalue. Left and right differ by the randomly generated initial iterate.}
    \end{figure}

    \subsubsection{Example 1}
    
    In order to study how the addition of the previous vector affects convergence while informed by the heavy-ball method, we consider using the subspace \begin{equation}\text{span}\{x^{(k)}+\beta(x^{(k)}-x^{(k-1)}),{(A-\rho^{(k)}B)x^{(k)}}\}\end{equation} with fixed parameter $\beta$ in Algorithm 2.1, which allows us to directly choose the parameter $\beta_k$ for (2) instead of letting Rayleigh Ritz implicitly choose it optimally at each step. We will call this our fixed $\beta$ method. For matrix $A$, we generated a $500\times500$ diagonal matrix with entries $(i,i)=0.1i$ for $1\leq i\leq500$ and set $B=I$. Thus, the eigenvalues of $(A,B)$ are evenly spaced with the smallest at $\lambda_1=0.1$ with the first standard basis vector $e_1$ as a corresponding eigenvector. Also, the subspace used by LOPCG is then equivalent to $\text{span}\{x^{(k)},x^{(k)}-x^{(k-1)},{Ax^{(k)}}\}$. We generate the initial iterate $x^{(0)}$ randomly with $||x^{(0)}||_2=1$ and declare convergence when the residual norm of the latest approximate eigenpair is smaller than $10^{-10}$. Figure 1 shows results for various selected $\beta$ compared to steepest descent ($\beta=0$) and LOPCG without preconditioning ($\beta$ chosen locally optimally at each step).

    We note, as $B=I$, using the subspace 
    \begin{align*}
    &\text{span}\{x^{(k)}+\beta_k(x^{(k)}-x^{(k-1)}),{(A-\rho^{(k)}B)(x^{(k)}+\beta_k(x^{(k)}-x^{(k-1)})}\}
    \\
    &= \text{span}\{x^{(k)}+\beta_k(x^{(k)}-x^{(k-1)}),{A(x^{(k)}+\beta_k(x^{(k)}-x^{(k-1)})}\}
    \end{align*}
    for this example reduces the method to the extrapolated restarted $k$-step Arnoldi algorithm of \cite{Pollock_Scott_2021}. While our subspace (3) is slightly different, we may use the same analysis of extrapolation shown in \cite{Pollock_Scott_2021} to explain convergence behavior for certain choice of $\beta$ for our example: As $A$ is symmetric, it has a basis of orthonormal eigenvectors $\{v_i\}_{i=1}^{500}$. Let $x^{(k)}$ denote the output of the fixed $\beta$ method when $\beta=0$, that is, essentially steepest descent, at step $k$ and define
    \[\eta_i^{(k+1)}:=\frac{v_i\cdot\text{proj}_{v_i}x^{(k+1)}}{v_i\cdot\text{proj}_{v_i}x^{(k)}},\] which quantifies the change in contribution of $v_i$ from $x_k$ to $x_{k+1}$. When $|\eta_i^{(k+1)}|<1$, the mode is being damped, and once $\eta_i^{(k+1)}=0$, the mode is no longer contributing to our iterate. Let $y^{(k)}$ denote the output of the fixed $\beta$ method when $\beta\not=0$ at step $k$. Then
    \[\hat{\eta}_i^{(k+1)}:=\frac{v_i\cdot\text{proj}_{v_i}y^{(k+1)}}{v_i\cdot\text{proj}_{v_i}x^{(k)}}=(1+\beta_{k+1})\eta_i^{(k+1)}-\beta_{k+1}.\]
    When $|\hat{\eta}_i^{(k+1)}|<|\eta_i^{(k+1)}|<1$, the fixed $\beta$ method is more effectively damping unwanted eigenmodes than steepest descent. As shown in Remark 3.4 of \cite{Pollock_Scott_2021}, while choosing $\beta=0.25$ or $\beta=0.5$ yields a larger interval satisfying $|\hat{\eta}_i^{(k+1)}|<|\eta_i^{(k+1)}|$ than $\beta=0.75$, the modes damped the most by $\beta=0.75$ are the ones that decay more slowly for the original method than the modes damped most by $\beta=0.25$ and $\beta=0.5$. This holds true for higher values of $\beta$, but, when close to $\beta=1$, the modes that were being the most damped by the original method are now barely being damped. We then expect to see best convergence for $\beta\in[.75,.9]$.

    \begin{figure}
        \centering
        \subfloat{\includegraphics[width=0.65\textwidth]{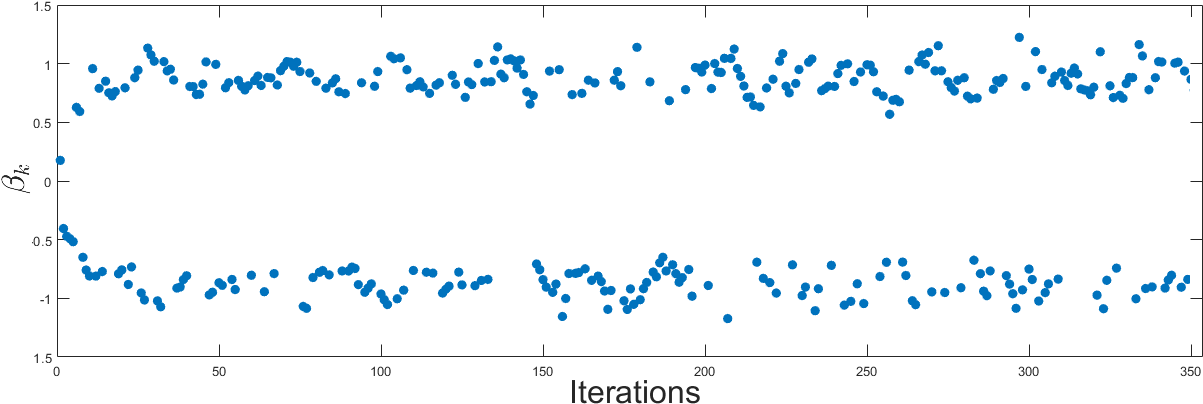} \label{fig:a}}\\
        \subfloat{\includegraphics[width=0.65\textwidth]{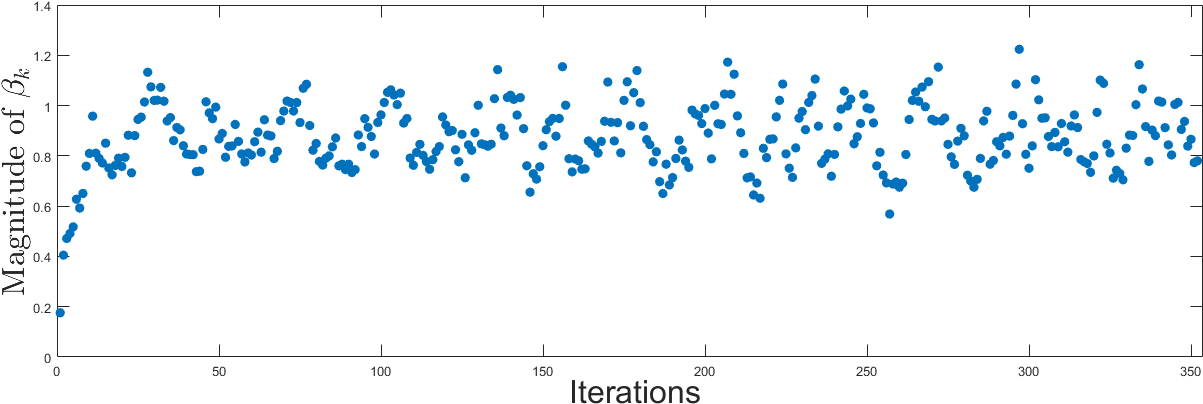} \label{fig:b}}
        \caption{The values and magnitudes of $\beta_k$ chosen by LOPCG for a randomly generated initial iterate.} \label{fig:AB}
    \end{figure}
    
    We observed the following behavior. As $\beta$ approaches to zero, the fixed $\beta$ method approached steepest descent as expected. The fixed $\beta$ method outperformed steepest descent for $0<\beta<1$, performed similarly or worse than steepest descent for $\beta\approx1$, and may fail to converge for larger $\beta$. The optimal choice of fixed $\beta$ was around $\beta\approx0.84$, which matches Polyak's suggested choice of fixed $\beta$ between $.8$ and $.99$ for convex methods \cite{Polyak_1964} as well as the analysis from \cite{Pollock_Scott_2021}. We may solve a simple system of equations using (2) to obtain LOPCG's implicit choices of $\beta_k$, shown in Figure 2 for a randomly generated intial iterative. We observed LOPCG's choice of $\beta_1$ had magnitude around 0.2 and $|\beta_k|$ quickly grew to oscillate from around 0.6 to 1.2. We also noted that while the fixed $\beta$ methods converge for $\beta_k$ values positive at each step, LOPCG's $\beta_k$ values fluctuate between positive and negative. We discovered this correlated with the method switching between converging to $e_1$ and $-e_1$, which are both solutions to the problem. Forcing the method towards $e_1$ rather than $-e_1$, that is, multiplying the approximate eigenvector by $-$1 when the first entry is negative, resulted in all positive locally optimal $\beta_k$ while achieving the same exact residual norm at each step as the method without forcing a direction.
    
    Occasionally, we observed the fixed $\beta$ method outperforming LOPCG as shown in the right graph in Figure 1. We discovered this occurs when our initial iterate had a smaller magnitude first entry compared to its other entries. We tested this by generating a ``good choice" of initial iterate, i.e. one close to $e_1$, with the first entry equal to 100 and all other entries equal to 1 before normalizing. A ``bad choice" of initial iterate was generated similarly with the first entry equal to $0.01$. Results are shown in Figure 3. We observed LOPCG significantly outperforming the fixed $\beta$ method for all $\beta$ when using the good initial iterate, while the fixed $\beta$ method outperformed LOPCG for $\beta=0.8$ to $\beta=0.95$ when using the bad initial iterate. Specifically, the total number of iterations needed for LOPCG to converge ranged widely for different choices of initial iterate, while the fixed methods had a much smaller range.

    \begin{figure}
        \centering
        \includegraphics[width=0.49\textwidth]{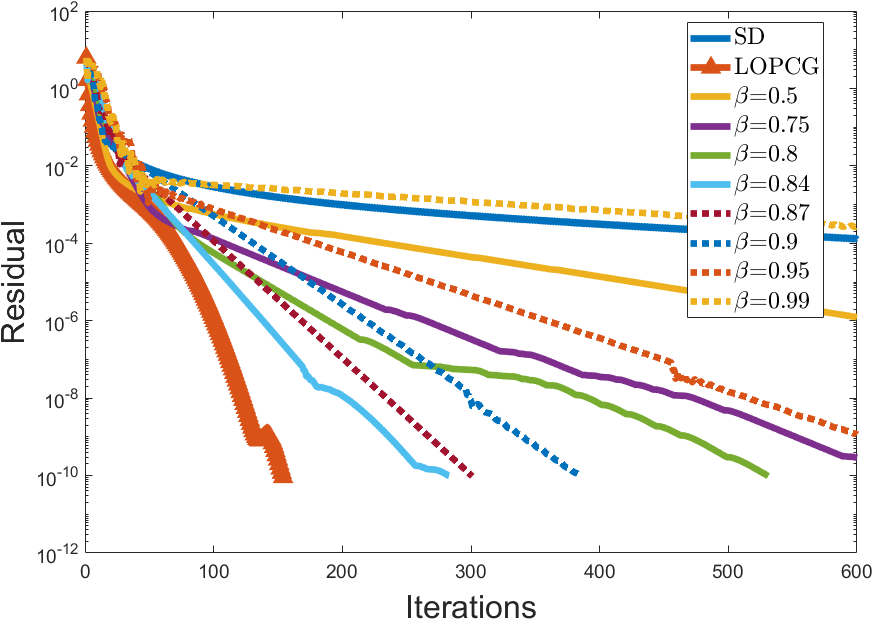}
        \includegraphics[width=0.49\textwidth]{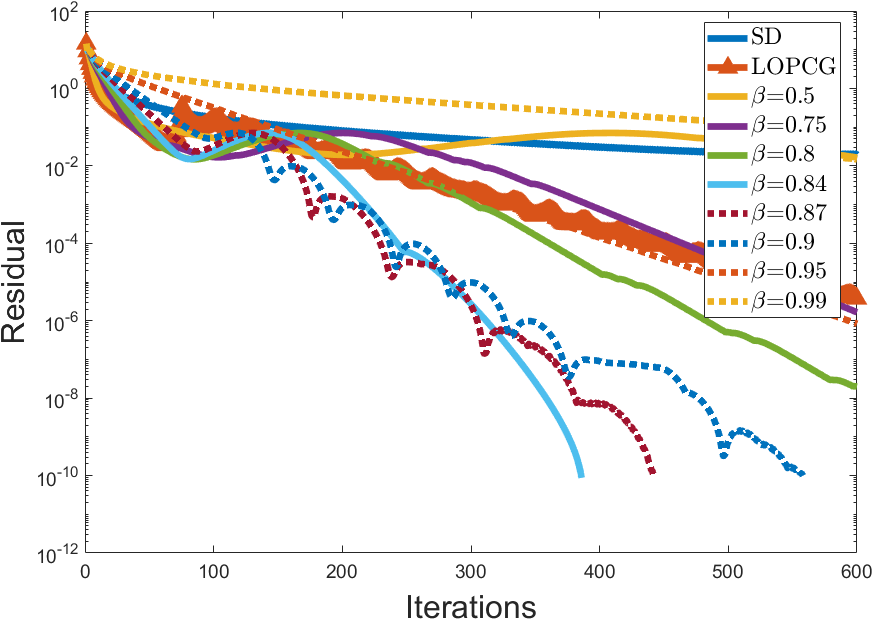}
        \caption{Residual convergence solving for the smallest eigenvalue. Left: Good choice of initial iterate. Right: Bad choice of initial iterate.}
        \includegraphics[width=0.49\textwidth]{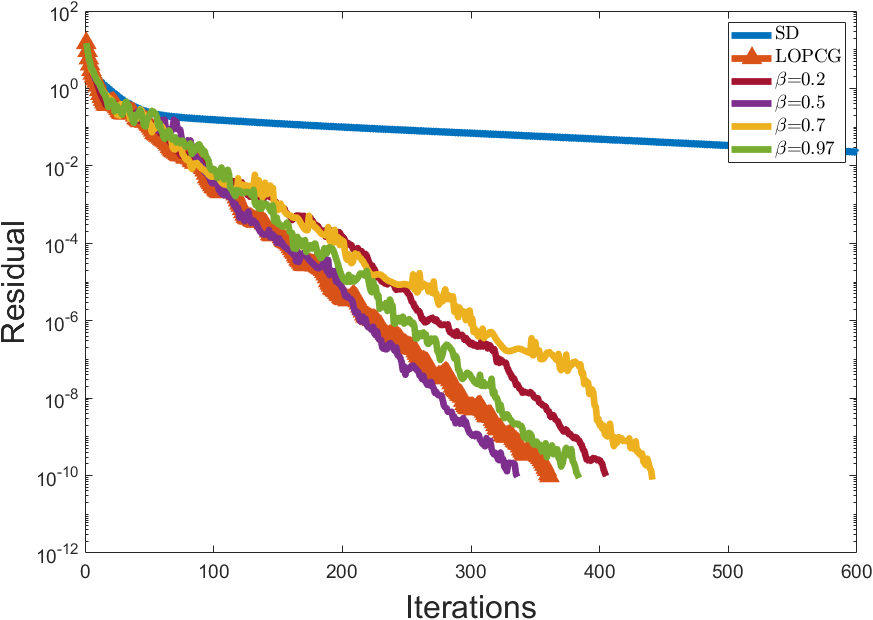}
        \includegraphics[width=0.49\textwidth]{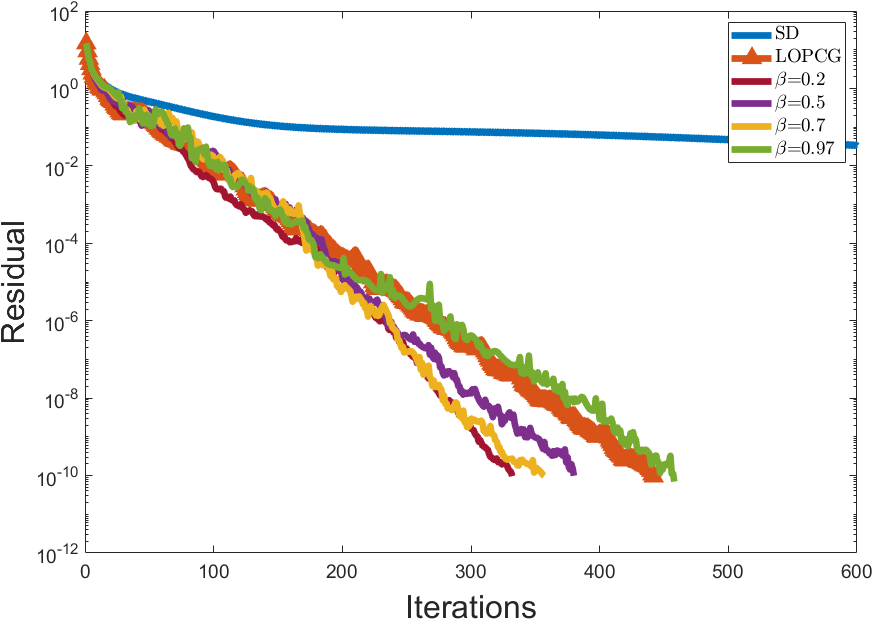}
        \caption{Residual convergence solving for the smallest eigenvalue. Left and right differ by the randomly generated initial iterate.}
    \end{figure}

    \subsubsection{Example 2}
    We next consider the subspace \[\text{span}\{x^{(k)},x^{(k-1)},{(A-\rho^{(k)}B)(x^{(k)}+\beta_k(x^{(k)}-x^{(k-1)})}\},\] which, as $B=I$, is equivalent to \[\text{span}\{x^{(k)},x^{(k-1)},{A(x^{(k)}+\beta_k(x^{(k)}-x^{(k-1)})}\}.\] This choice of subspace is motivated by by the results of \cite{Pollock_Scott_2021} and LOPCG's ability to choose $\beta_k$ locally optimally, as well as the ease in proving convergence for this choice of subspace by including $x^{(k)}$ in the subspace, as shown in section 5.1. Figure 4 shows results for two random choices of initial iterate for a few values of fixed $\beta$. We observe similar results as before, where the method occasionally outperforms LOPCG for good choice of $\beta$, especially when the choice of initial iterate is ``bad". We also observed the best performing fixed $\beta$ varied greatly, including values from $\beta=0.1$ to $\beta=.97$, unlike before. This potential speed-up motivates our interest in changing the Krylov subspace of \cite{Golub_Ye_2002} to include previous iterates in the search direction. We will consider extrapolation in the style of \cite{Pollock_Scott_2021} and that of generalized Nesterov acceleration and generalized heavy-ball.
     
    \section{Acceleration}
    Inspired by the previous results, we now consider different methods of including the previous block of iterates in the search directions. We consider the heavy-ball method as implemented in Example 2, which we call our depth-1 method, and explore extrapolation in the form of generalized Nesterov acceleration and by generalizing the heavy-ball method.
    
    \subsection{Generalized Nesterov acceleration}

    In 1983 \cite{Nesterov_1983}, Yurii Nesterov introduced a modification of the steepest descent algorithm. Given a closed, convex subset $U$ of Hilbert space $X$ and a convex function $f:X\rightarrow\mathbb{R}$ to minimize, steepest descent takes the form of
    \[x_{k+1}=x_k-\tau_k\nabla f(x_k)=:q(x_k),\]
    at step $k$ for step size $\tau_k$. Nesterov combines the current gradient at $k$ and past momentum to give the update variable
    \[y_k=x_k+\frac{\lambda_{k-1}}{\lambda_k}(x_k-x_{k-1}),\] where 
    \[\lambda_0=1,\hspace{1cm}\lambda_k=\frac{1+\sqrt{1+4\lambda_{k-1}^2}}{2},\hspace{1cm}i=1,2,\dots\]
    Nesterov accelerated steepest descent is then \[x_{k+1}=y_k-\tau_k\nabla f(y_k)=q(y_k).\] For smooth $f$, Nesterov proved the convergence rate of his accelerated method is $O(1/n^2)$, improving steepest descent's rate of $O(1/n)$. 

    In 2020 \cite{Hans_2020}, Mitchell, Ye, and de Sterck presented a generalized Nesterov extrapolation approach: given a generic iterative optimization method with update $x_{k+1}=q(x_k)$, Nesterov extrapolation may be applied to obtain 
    \[x_{k+1}=q(y_k),\hspace{1cm}y_k=x_k+\beta_{k}(x_k-x_{k-1}),\]
    where $\beta_{k}$ is our momentum weight. The authors use the generalized approach to accelerate alternating least squares, a non-convex problem. The original momentum weight sequence is shown unsuitable for their method, and they recommend instead using $\beta_k=\dfrac{||\nabla f(x_k)||}{||\nabla f(x_{k-1})||}$. The authors reason this $\beta_k$ will be large when the algorithm is converging slowly and we want to apply more acceleration, and small when the algorithm has fast convergence and we want to let it be. Similar choice and reasoning to this $\beta_k$ appears in \cite{Dallas_2024} for accelerating Newton's method.

    \subsection{Generalized heavy-ball method}
    
    We can also generalize the heavy-ball method to obtain the iteration    
    \[x_{k+1}=q(x_k)+\beta_{k+1}(x_k-x_{k-1}).\]
    We want to keep the updated approximate eigenvector as the output of our method, so we use change of variables to get the equivalent iterative method
    \[x_{k+1}=q(y_k),\hspace{1cm}y_k=x_k+\beta_k(y_{k-1}-y_{k-2}),\hspace{1cm}y_0=x_0.\]
    For ease of implementation and reduced storage, we consider the following variation as our heavy-ball-like method:
    \[x_{k+1}=q(y_k),\hspace{1cm}y_k=x_k+\beta_ky_{k-1},\hspace{1cm}y_0=x_0.\]

    \subsection{Applying acceleration to the single vector method}

    As the inverse-free Krylov subspace method needs only $x_{k-1}$ as input at step $k$, we may then write the algorithm from \cite{Golub_Ye_2002} as $x_k=q(x_{k-1})$ and consider applying generalized Nesterov or heavy-ball acceleration, as shown in Algorithm 3.1. We also consider our depth-1 extrapolation method as in \cite{Pollock_Scott_2021}. For all three methods, fixing $\beta_k$ at $0$ at every step gives the original inverse-free Krylov subspace method.
    
    \begin{algorithm}[h!]
    \caption{Accelerated Inverse-Free Krylov Subspace Method.}
    \textbf{Input:} Symmetric $A\in\mathbb{R}^{n\times n}$, S.P.D. $B\in\mathbb{R}^{n\times n}$, $x_0\in\mathbb{R}^n$ with $||x_0||=1$, and $m\geq1$.
    \begin{algorithmic}[1]
        \State $\rho_0=\rho(x_0),\enskip y_0=x_0,\enskip\theta_0=\rho_0$
        \For{$k=0,1,2,\dots$}
            \State Construct basis $Z_m$ for $K_m=\text{span}\{x_k,y_k,(A-\theta_kB)y_k,...,(A-\theta_kB)^my_k\}$
            \State Form $A_m=Z_m^T(A-\theta_kB)Z_m$ and $B_m=Z_m^TBZ_m$
            \State Find the smallest eigenpair $(\mu, v)$ of $(A_m, B_m)$
            \State $x_{k+1}=Z_mv$
            \State $\rho_{k+1}=\rho(x_{k+1})$
            \State For depth-1 extrapolation:  $y_{k+1}=x_{k+1}+\beta_k(x_{k+1}-x_k),\enskip\theta_k=\rho_k$
            \State For Nesterov-like acceleration:  $y_{k+1}=x_{k+1}+\beta_k(x_{k+1}-x_k),\enskip\theta_k=\rho(y_k)$
            \State For heavy-ball-like acceleration:  $y_{k+1}=x_{k+1}-\beta_ky_k,\enskip\theta_k=\rho_k$
        \EndFor
    \end{algorithmic} 
    \end{algorithm}

    The difference between the base method and accelerated methods is the Krylov subspace. We include the previous vector for the base method as introduced in the block method.

    \noindent{\bf Base single vector subspace:} \[K_m=\text{span}\{x_k,x_{k-1},(A-\rho_kB)x_k,\dots,(A-\rho_kB)^mx_k\}.\]
    We note the base subspace is then equivalent to the subspace $\text{span}\{x_k+\beta_k(x_k-x_{k-1}),(A-\rho_kB)x_k,\dots,(A-\rho_kB)^mx_k\}$ with the locally optimal choice of $\beta_k$ at each step. Inspired by this, we add $x_k$ to our accelerated methods' subspaces as well. In section 5.1 we show the addition of $x_k$ to the subspaces allows us to prove convergence for the block methods, which includes the single vector methods for block size 1.
    
    \noindent{\bf Single vector depth-1 extrapolation method subspace:} \[K_m=\text{span}\{x_k,y_k,(A-\rho_kB)y_k,...,(A-\rho_kB)^my_k\},\]
    where $y_{k}=x_{k}+\beta_k(x_k-x_{k-1})$.
    
    \noindent{\bf Single vector Nesterov-like accelerated method subspace:} \[K_m=\text{span}\{x_k,y_k,(A-\theta_kB)y_k,...,(A-\theta_kB)^my_k\},\]
    where $y_{k}=x_{k}+\beta_k(x_k-x_{k-1})$, $\theta_k=\rho(y_k)$.
    
    \noindent{\bf Single vector heavy-ball-like accelerated method subspace:} \[K_m=\text{span}\{x_k,y_k,(A-\rho_kB)y_k,\dots,(A-\rho_kB)^my_k\},\]
    where $y_{k}=x_{k}+\beta_k y_{k-1}=\sum_{i=0}^k(\prod_{j=0}^{k-i}\beta_j)x_i$ where $\beta_0=1$.
    
    Though we include the current step to allow Rayleigh Ritz to implicitly chose the locally optimal weight for $y_k$, we do not actually calculate the value of $\beta_k$ at step $k$ due to high computation cost. Thus, this implicit choice is not used for the updated search directions $(A-\rho_{y_k}B)y_k,...,(A-\rho_{y_k}B)^my_k$ and a good choice of parameter $\beta_k$ at each step is still necessary to facilitate good acceleration. We consider $\beta_k = \beta$ as a fixed value between 0 and 1 and adaptively at step $k$ as $\beta_k=\dfrac{||\nabla\rho(x_k)||}{||\nabla\rho(x_{k-1})||}$, chosen similarly as the Nesterov parameter recommended in \cite{Hans_2020}. We also consider a safeguarded scheme where $\beta_k=\min\left(\dfrac{||\nabla\rho(x_k)||}{||\nabla\rho(x_{k-1})||},\beta_{max}\right)$ with $\beta_{max}\in(0,1]$.
    
    \subsection{Analysis of extrapolation}
  
    We begin with the depth-1 extrapolation and Nesterov-like accelerated methods which have similar form to the method in \cite{Pollock_Scott_2021}. We base our analysis of extrapolation on Section 3.1 of \cite{Pollock_Scott_2021} as in Section 2.3. We assume matrices $A,B\in\mathbb{R}^{n\times n}$ are symmetric with $B$ positive definite and thus invertible. Then their pencil has a basis of orthonormal eigenvectors $\{v_i\}_{i=1}^{n}$. Let $x^{(k)}$ be the approximate eigenvalue output by the inverse-free Krylov subspace method at step $k$ and $y^{(k)}=x_{k}+\beta_k(x_k-x_{k-1})$. We define $\eta^{(k+1)}$ and $\hat{\eta}^{(k+1)}$ as before:
    \[\eta_i^{(k+1)}:=\frac{v_i\cdot\text{proj}_{v_i}x^{(k+1)}}{v_i\cdot\text{proj}_{v_i}x^{(k)}},\]
    \[\hat{\eta}_i^{(k+1)}:=\frac{v_i\cdot\text{proj}_{v_i}y^{(k+1)}}{v_i\cdot\text{proj}_{v_i}x^{(k)}}=(1+\beta_{k+1})\eta_i^{(k+1)}-\beta_{k+1}.\]
    We note that $\text{span}\{x_k,y_k\}=\text{span}\{x_k,x_{k-1}\}$. Then, by including $x_k$ in our subspace, the choice of $\beta_k$ affects only our search directions $(A-\theta_kB)^iy_k$, $i=1,\dots,m$. Acceleration can still be attained from applying the extrapolated vector to the search directions, as shown in numerical results in Sections 4 and 6. However, as seen in Section 2.3.2, the best choice of fixed $\beta$ is no longer necessarily in the range of $0.8$ to $0.9$. We observed convergence in fewer iterations generally for $\beta\in(0.01,0.9)$, but the best choice of $\beta$ was often smaller, with $\beta\in(.07,.03)$.

    For the heavy-ball-like accelerated method, our observations note a much smaller range of effective $\beta$ which is explained through the analysis: Let $x^{(k)}$ be as before and $y_{k}=x_{k}+\beta_k y_{k-1}$. We now have
    \[\hat{\eta}_i^{(k+1)}:=\frac{v_i\cdot\text{proj}_{v_i}y^{(k+1)}}{v_i\cdot\text{proj}_{v_i}x^{(k)}}=\eta_i^{(k+1)}+\beta_{k+1}\left(1+\sum_{l=1}^j\prod_{m=1}^l\frac{\beta_{j+1-m}}{\eta^{(j+1-m)}}\right).\]
    Unlike before, our damping parameter depends on all previous $\eta^{(k)}$ and $\beta_k$. We note as $\eta^{(k)}$ can obtain negative values, negating fixed $\beta$ here produces similar results as postive $\beta$, unlike the previous methods. When assuming the previous $\eta^{(k)}$ is slightly larger than $\eta^{(k+1)}$, we see similar but slightly better damping for $\beta$ close to 1 compared to the damping by the previous methods. However, when $\beta>0.3$, the values of $\hat{\eta}$ no longer stay inside the range $(-1,1)$. Rather, the previously smallest eigenmodes instead become very large. This is demonstrated in our numerical results, where we observed good acceleration for the heavy-ball-like method for $\beta\in(.05,.25)$, worse convergence than the original method for $\beta>.5$, and often failure to converge for $\beta>.9$.

\section{Numerical results}

    In this section, we present results demonstrating the convergence of our accelerated inverse-free Krylov subspace methods. We consider the Laplacian eigenvalue problem with a barbell shaped domain that cause the eigenvalues of the resulting pencil to be clustered. As the inverse-free Krylov subspace method and the accelerated methods differ only by subspace, this paper uses a Matlab implementation of the generic eigensolver shown in Algorithm 2.1 with the appropriately constructed subspaces for each method. We fix the Krylov subspace size $m$ and compare the total number of iterations (referred to in \cite{Golub_Ye_2002} and \cite{Quillen_Ye_2010} as ``outer iterations"). The Matlab command {\tt eig} was used to compute the smallest eigenvalue of the $m\times m$ projected problem. All tests were performed in Matlab R2024b running on a machine with an AMS Ryzen Quadcore processor and 16 GB of RAM running Windows 10.
    
    \begin{figure}
        \centering
        \includegraphics[width=0.45\textwidth]{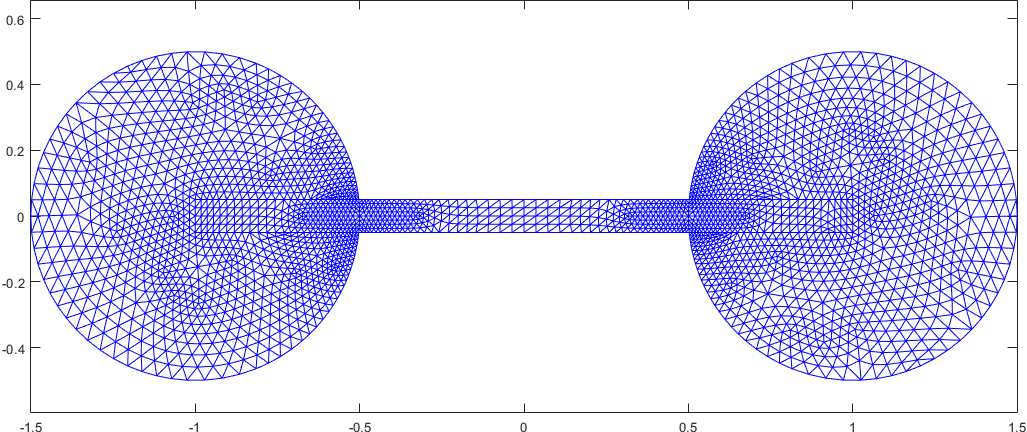}
        \caption{Mesh for the barbell shaped domain}
    \end{figure}

    \noindent We compare our method to the original inverse-free Krylov subspace method we are accelerating. We consider the Laplacian eigenvalue problem
    \begin{align}\label{eqn:laplace}
    -\Delta u(x)=\lambda u(x),\quad x\in\Omega,
    \end{align}
    with Dirichlet boundary conditions where $\Omega$ is a barbell shaped domain. This problem is the same used in \cite{Quillen_Ye_2010} to show results for BLEIGIFP. Discretizing the problem gives us stiffness matrix A and positive-definite mass matrix B, symmetric matrices of dimension $2441\times2441$ whose pencil results in eigenvalue clusters of size two. We solve for the smallest eigenvalue of the pencil.

    We use the same fixed Krylov parameter $m$ and randomly generated initial eigenvector approximation for each method. We declare convergence when the residual norm of the latest approximate eigenpair is smaller than $10^{-7}$. Results are shown in Figure 6 for the three accelerated methods for some hand-picked $\beta$ with the best performance and safeguarded adaptive $\beta_k$.
    
    \begin{figure}
        \centering
        \includegraphics[width=0.49\textwidth]{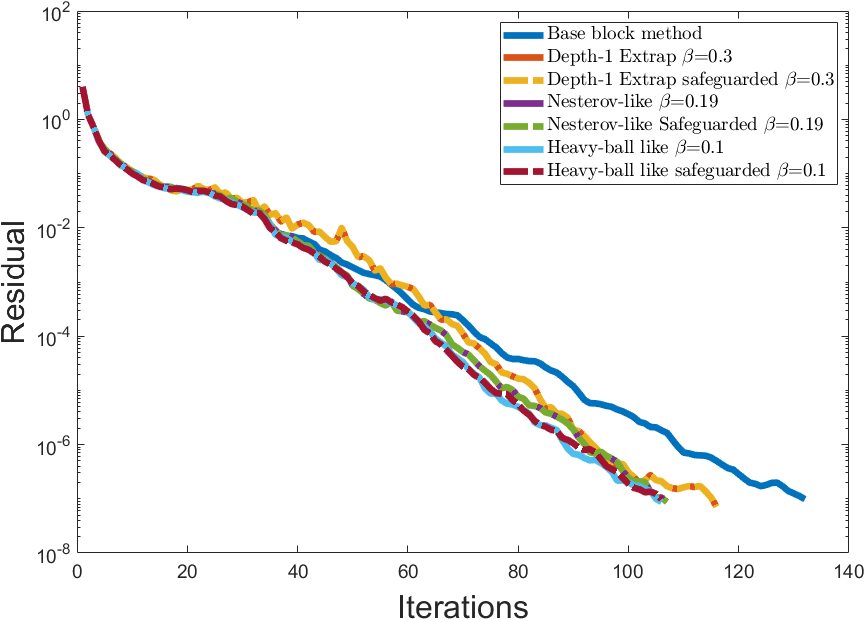}
        \includegraphics[width=0.49\textwidth]{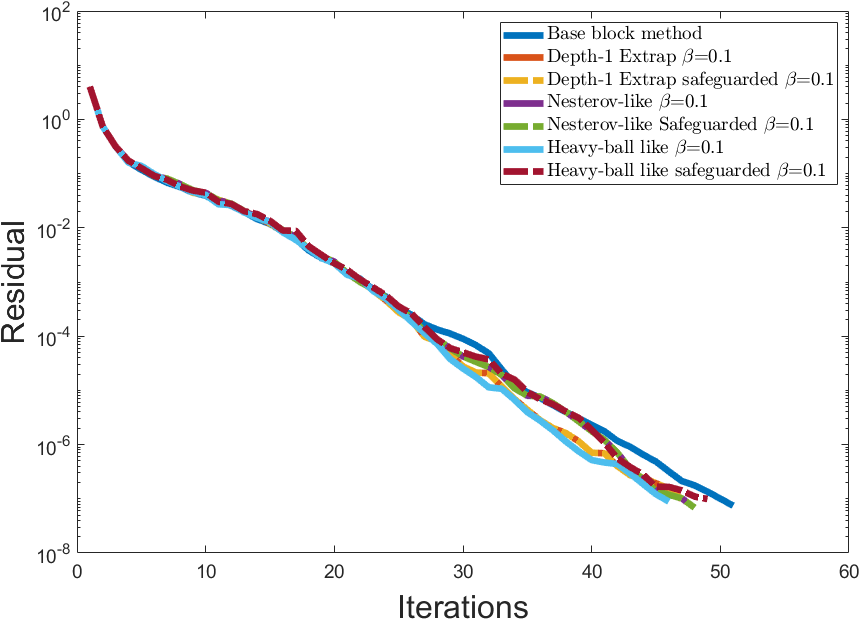}
        \caption{Residual convergence solving for the smallest eigenvalue. Left: $m=2$, right: $m=4$.}
    \end{figure}

    We observed the following behavior. The depth-1 extrapolation and Nesterov-like fixed methods performed best for $\beta<0.5$. For the heavy-ball-like method, the best seen choices for $\beta$ were around $\beta=0.1$. Often, the best performing choice of $\beta$ was so small that the fixed and safeguarded methods were the same. For small Krylov parameter $m$, the accelerated methods converged in fewer steps than the original method for well chosen $\beta$. As we increased $m$, we observed less of an impact from acceleration with the methods performing about the same as the block method for well chosen fixed $\beta$. 
    
    We note at each step the depth-1 extrapolation and heavy-ball-like methods require an extra addition and multiplication, while the Nesterov-like accelerated method requires an extra addition, multiplication, and computation of a Rayleigh quotient. The adaptive versions of the method also require computation of $\beta_k=\dfrac{||\nabla\rho(x^{(k)})||}{||\nabla\rho(x^{(k-1)})||}$.

\section{Applying acceleration to the block vector method}

    We extend applying depth-1 extrapolation, generalized Nesterov acceleration, and generalized heavy-ball acceleration to the block version of the inverse-free Krylov subspace method similarly as the single vector version, as shown in Algorithm 5.1. The block method solves for the $b$ smallest eigenvalues of $(A,B)$. Again, fixing $\beta_k$ at 0 at every step gives the original block method, and the difference between the methods is the subspace used. We use $\theta_i^{(k)}$ to represent entry $(i,i)$ of $\Theta^{(k)}$, the approxiation of the $i^{th}$ smallest eigenvalue at step $k$.

    \begin{algorithm}[h!]
    \caption{Accelerated Block Inverse-Free Krylov Subspace Method.}
    \textbf{Input:} Symmetric $A\in\mathbb{R}^{n\times n}$, S.P.D. $B\in\mathbb{R}^{n\times n}$, $X^{(0)}\in\mathbb{R}^{n\times b}$ with $X^{(0)*}BX^{(0)}=I_b$, and $m\geq1$.
    \begin{algorithmic}[1]
        \State $P^{(0)}=\text{diag}(X^{(0)*}AX^{(0)}),\enskip Y^{(0)}=X^{(0)}$, $\Theta^{(0)}=P^{(0)}$
        \For{$k=0,1,2,\dots$}
            \For{$i=0,1,\dots,b$}
                \State Construct basis $\hat{Z_i}$ of $K_m(A-\theta_i^{(k)}B,y_i^{(k)})$
            \EndFor
            \State Orthonormalize $(X^{(k)},\hat{Z_1},\hat{Z_2},\dots,\hat{Z_p})$ to obtain Z
            \State Form $A_m=Z_m^TAZ_m$ and $B_m=Z_m^TBZ_m$
            \State Find the $b$ smallest eigenpairs $(\mu_i, v_i)$, $1\leq i\leq b$ of $(A_m, B_m)$
            \State $X^{(k+1)}=ZV$, $V=(v_1,v_2,\dots,v_b)$
            \State $P^{(k+1)}=\text{diag}(\mu_1,\mu_2,\dots\mu_b)$
            \State For depth-1 extrapolation:  $Y^{(k+1)}=X^{(k+1)}+\beta_k(X^{(k+1)}-X^{(k)}),\enskip\Theta^{(k)}=P^{(k)}$
            \State For Nesterov-like acceleration:  $Y^{(k+1)}=X^{(k+1)}+\beta_k(X^{(k+1)}-X^{(k)}),\newline\phantom{For Nesterov-like acceleration::}\Theta^{(k)}=\text{diag}\left(\rho(y_1^{(k)}),\rho(y_2^{(k)}),\dots,\rho(y_b^{(k)})\right)$
            \State For heavy-ball-like acceleration:  $Y^{(k+1)}=X^{(k+1)}-\beta_kY^{(k)},\enskip\Theta^{(k)}=P^{(k)}$
        \EndFor
    \end{algorithmic} 
    \end{algorithm}

    We consider fixed values of $\beta$ and an adaptive choice of $\beta_k=\dfrac{\left|\left|\nabla\rho\left(x_1^{(k)}\right)\right|\right|}{\left|\left|\nabla\rho\left(x_1^{(k-1)}\right)\right|\right|}$, similar to the single vector methods. When solving for $b$ smallest eigenvalues, at each step, the fixed depth-1 extrapolation and heavy-ball-like methods require $b$ extra additions and multiplications, while the Nesterov-like method also requires computation of $b$ many Rayleigh quotients. The adaptive versions of the block methods require computation of $\beta_k$ at each step.

    \subsection{Analysis of convergence}

    We use Lemma 2.1.1 of \cite{Quillen_2005}, stated here as Lemma 1:

    \begin{lemma}
        Let $\lambda_1\leq\dots\leq\lambda_n$ denote the eigenvalues of $(A,B)$. Let the Ritz values obtained from Algorithm 2.1 at the $k^{th}$ iteration be denoted by $\rho_1^{(k+1)}\leq\dots\leq\rho_b^{(k+1)}$ with corresponding Ritz vectors $x_1^{(k+1)},\dots,x_b^{(k+1)}$. Suppose further that \[\{x_1^{(k)},\dots,x_b^{(k)}\}\subset S^{(k)}.\] The following relations hold
        \[\lambda_i\leq\rho_i^{(k+1)}\leq\rho_i^{(k)},\quad1\leq i\leq b\]
        \[(A-\rho_i^{(k)}B)x_i^{(k)}\perp S^{(k)},\quad1\leq i\leq b.\]
    \end{lemma}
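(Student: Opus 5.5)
The plan is to treat both relations as standard consequences of the Rayleigh--Ritz procedure, with the Courant--Fischer min-max characterization for the pencil $(A,B)$ as the main tool. Let $Z$ be a matrix whose columns form a basis of $S^{(k)}$. The Ritz pairs $(\rho_i^{(k+1)},x_i^{(k+1)})$ are $\rho_i^{(k+1)}=\mu_i$ and $x_i^{(k+1)}=Zv_i$, where $(\mu_i,v_i)$ are the eigenpairs of the projected pencil $(Z^TAZ,Z^TBZ)$. Since $B$ is S.P.D., $Z^TBZ$ is S.P.D., so the projected pencil is a symmetric-definite problem with real eigenvalues and $Z^TBZ$-orthonormal eigenvectors.

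\emph{Orthogonality (Galerkin) relation.} From $Z^TAZv_i=\mu_i Z^TBZv_i$ we get $Z^T(A-\rho_i^{(k+1)}B)x_i^{(k+1)}=0$. Hence the residual of each new Ritz pair is orthogonal to $S^{(k)}$. I read the second displayed relation as this statement about the Ritz pairs produced from $S^{(k)}$, which is the natural Galerkin condition; the indexing in the display appears to be shifted by one step.

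\emph{Lower bound.} By Courant--Fischer applied to the projected pencil,
\[\rho_i^{(k+1)}=\min_{U\subset S^{(k)},\,\dim U=i}\ \max_{0\neq x\in U}\rho(x).\]
Every $i$-dimensional $U\subset S^{(k)}$ is also an $i$-dimensional subspace of $\mathbb{R}^n$. The minimum over the smaller family is therefore at least the minimum over all $i$-dimensional subspaces, which is $\lambda_i$. This gives $\lambda_i\le\rho_i^{(k+1)}$.

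\emph{Upper bound.} Take $U=\text{span}\{x_1^{(k)},\dots,x_i^{(k)}\}$, which lies in $S^{(k)}$ by hypothesis. It suffices to show $\max_{x\in U}\rho(x)=\rho_i^{(k)}$. This is the key step: it needs the previous iterates to be Ritz vectors from the previous step's projection. Then they are $B$-orthonormal and also $A$-orthogonal, that is, $x_j^{(k)T}Ax_l^{(k)}=\rho_j^{(k)}\delta_{jl}$, because the projected matrices were diagonalized. For $x=\sum_{j\le i}c_jx_j^{(k)}$ this gives
\[\rho(x)=\frac{\sum_j c_j^2\rho_j^{(k)}}{\sum_j c_j^2}\le\rho_i^{(k)},\]
using the ordering $\rho_1^{(k)}\le\dots\le\rho_i^{(k)}$. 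Hence $\rho_i^{(k+1)}\le\rho_i^{(k)}$.

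The main obstacle is the base case $k=0$. The initial block $X^{(0)}$ is only assumed $B$-orthonormal and need not be $A$-orthogonal. I would handle this by first performing a Rayleigh--Ritz step on $\text{span}(X^{(0)})$, which does not change the span. Alternatively, one can define $\rho_i^{(0)}$ as the Ritz values of that span and apply the same argument. For $k\ge1$ the $A$-orthogonality holds automatically by the construction in Algorithm 2.1, so the chain of inequalities follows by induction on $k$.
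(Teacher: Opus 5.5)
The paper gives no proof of this lemma; it quotes it as Lemma 2.1.1 of Quillen's thesis, so there is no in-paper argument to compare against. Your proof is the standard Rayleigh--Ritz argument and it is correct:
\begin{itemize}
\item the Galerkin condition comes directly from the projected eigen-equation;
\item the lower bound comes from Courant--Fischer restricted to subspaces of $S^{(k)}$;
\item the upper bound comes from Courant--Fischer with test space $\text{span}\{x_1^{(k)},\dots,x_i^{(k)}\}$, using $B$-orthonormality and $A$-orthogonality of the previous Ritz vectors.
\end{itemize}

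Both of your caveats are genuine, not pedantic.

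First, the orthogonality relation as printed, $(A-\rho_i^{(k)}B)x_i^{(k)}\perp S^{(k)}$, is false in general. For the base method $S^{(k)}$ contains that very residual, so the relation would force it to vanish. The correct statement is the one you prove, $(A-\rho_i^{(k+1)}B)x_i^{(k+1)}\perp S^{(k)}$, or with the printed indices, $\perp S^{(k-1)}$.

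Second, monotonicity really can fail at $k=0$ if $\rho_i^{(0)}$ are taken as the diagonal entries of $X^{(0)T}AX^{(0)}$, as in Algorithm 5.1. Take $A=\mathrm{diag}(0,1,10)$, $B=I$ and $x_{1,2}^{(0)}=(e_1\pm e_3)/\sqrt{2}$. Then $\rho_1^{(0)}=\rho_2^{(0)}=5$. Since $\text{span}\{e_1,e_3\}$ is $A$-invariant, the block Krylov space is $S^{(0)}=\text{span}\{e_1,e_3\}$ for every $m$. Its Ritz values are $0$ and $10$, so $\rho_2^{(1)}=10>\rho_2^{(0)}$.

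Your first fix, a preliminary Rayleigh--Ritz step on $\text{span}(X^{(0)})$, handles this cleanly. For your second fix, redefining $\rho_i^{(0)}$ as the Ritz values of $\text{span}(X^{(0)})$, the test space in the upper-bound step must be spanned by the corresponding Ritz vectors of $\text{span}(X^{(0)})$, not by the original columns of $X^{(0)}$. This still works because $\text{span}(X^{(0)})\subseteq S^{(0)}$.
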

    
    With the inclusion of the current step, we are able to use Lemma 1 to conclude the sequences $\mu_1^{(k)},\mu_2^{(k)},\dots,\mu_b^{(k)},$ converge for our accelerated methods. We follow Lemma 2.1.2 of \cite{Quillen_2005} with weakened assumptions to show these sequences do in fact converge to eigenpairs of $(A,B)$ for each accelerated method.

    \begin{theorem}
        Let $\lambda_1\leq\dots\leq\lambda_n$ denote the eigenvalues of $(A,B)$. Let the Ritz values obtained by the generic eigensolver at step $k$ be denoted $\rho_1^{(k)}\leq\dots\leq\rho_b^{(k)}$, with corresponding Ritz vectors $x_1^{(k)},\dots,x_b^{(k)},$ scaled such that $||x_i^{(k)}||_B=1$, for $1\leq i\leq b$. 
        Suppose that \[\{x_1^{(k)},\dots,x_b^{(k)},(A-\theta_1^{(k)}B)y_1^{(k)},\dots,(A-\theta_b^{(k)}B)y_b^{(k)}\}\subseteq S^{(k)},\] for some sequences $y_1^{(k)},\dots,y_b^{(k)}$ and $\theta_1^{(k)},\dots,\theta_b^{(k)}$. If, for $i=1,\dots,b$, each $y_i^{(k)}$ contains a subsequence $y_i^{(n_k)}$ converging to the same limit as a convergent subsequence of $x_i^{(k)}$, up to some non-zero scalar, and each $\lim\theta_i^{(n_k)}=\lim\rho_i^{(k)}$, then each $\rho_i^{(k)}$ converges to some eigenvalue $\hat{\lambda}$ of $(A,B)$ and $||(A-\hat{\lambda}B)x_i^{(k)}||\rightarrow0$.
    \end{theorem}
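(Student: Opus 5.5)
Since $x_i^{(k)}\in S^{(k)}$ for every $i$, Lemma 1 applies at every step. It gives $\lambda_i\leq\rho_i^{(k+1)}\leq\rho_i^{(k)}$, so each sequence $\rho_i^{(k)}$ is monotone nonincreasing and bounded below. Hence $\rho_i^{(k)}\to\hat\rho_i$ for some limit $\hat\rho_i$. It remains to show that $\hat\rho_i$ is an eigenvalue and that the residuals vanish. The $B$-normalization $\|x_i^{(k)}\|_B=1$ places the $x_i^{(k)}$ in a compact set, so every subsequence has a further convergent subsequence. I will argue by contradiction, in the style of Lemma 2.1.2 of \cite{Quillen_2005}, using the two-dimensional subspace $\text{span}\{x_i^{(k)},(A-\theta_i^{(k)}B)y_i^{(k)}\}\subseteq S^{(k)}$ as a test space in place of the exact residual.

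\textbf{Step 1 (limit point is an eigenvector).} Take the subsequence $n_k$ from the hypothesis along which $x_i^{(n_k)}\to\hat x_i$ with $\|\hat x_i\|_B=1$, $y_i^{(n_k)}\to c\,\hat x_i$ with $c\neq0$, and $\theta_i^{(n_k)}\to\hat\rho_i$. Then $r_k:=(A-\theta_i^{(n_k)}B)y_i^{(n_k)}\to c\,(A-\hat\rho_iB)\hat x_i=:c\,\hat r$. By continuity $\rho(\hat x_i)=\hat\rho_i$, so $\hat x_i^T\hat r=0$. Suppose $\hat r\neq0$. Consider the Rayleigh quotient on $\text{span}\{x_i^{(n_k)},r_k\}$. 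For $i=1$, the minimum of $\rho$ over this span is at most $\rho(x^{(n_k)}-t\,r_k)$ for small $t$. Expanding gives $\rho(x-tr)=\rho(x)-2t\,r^T(A-\rho(x)B)x+O(t^2)$, and in the limit the first-order coefficient tends to $2|c|^{2}\|\hat r\|^2\cdot\text{sign}$, a strictly positive constant after choosing the sign of $t$. So there are a fixed $t$ and $\delta>0$ with $\rho_1^{(n_k+1)}\leq\hat\rho_1-\delta$ for large $k$. This contradicts monotone convergence to $\hat\rho_1$. Hence $(A-\hat\rho_1B)\hat x_1=0$.

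\textbf{Step 2 (interior indices).} For $i>1$, I will use the Courant--Fischer characterization of the $i$th Ritz value. Fix the span $W_k=\text{span}\{x_1^{(n_k)},\dots,x_{i-1}^{(n_k)},x_i^{(n_k)}-t\,r_k\}\subseteq S^{(k)}$. Then $\rho_i^{(n_k+1)}\leq\max_{w\in W_k}\rho(w)$. I pass to a common subsequence for all indices, taken inductively by refining, so that all the $x_j^{(n_k)}$ converge. The Ritz vectors are $B$-orthogonal with $(A-\rho_jB)$-orthogonality inside $S^{(k-1)}$, so the cross terms between $x_j$ ($j<i$) and $x_i$ vanish. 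I will then show that the max over $W_k$ is $\rho(x_i-tr_k)+o(1)$ plus cross terms of order $t\,|x_j^TAr_k|$. The planned way to handle these is to replace $r_k$ by its component $B$-orthogonal to $\text{span}\{x_1,\dots,x_{i-1}\}$. This is still a legitimate search direction in $S^{(k)}$, and it has a nonzero limit: if its limit were zero, then $\hat r$ would lie in $B\,\text{span}\{\hat x_j\}$, and pairing with $\hat x_j$ and using symmetry together with the eigen-relation $(A-\hat\rho_jB)\hat x_j=0$ from the induction gives $\hat r=0$ anyway.

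\textbf{Step 3 (full sequence residual).} Once every limit point is shown to satisfy $(A-\hat\rho_iB)\hat x=0$, with $\hat\rho_i=\lambda$ an eigenvalue, I conclude $\|(A-\hat\lambda B)x_i^{(k)}\|\to0$ by the same contradiction: a subsequence with residual bounded below would have a limit point with nonzero residual. This is the delicate point, because the hypothesis only supplies one good subsequence $n_k$. I will therefore read the hypothesis as applying to every convergent subsequence of $x_i^{(k)}$; otherwise only $\liminf$ of the residual norm tends to $0$, which already yields that $\hat\rho_i$ is an eigenvalue.

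\textbf{Main obstacle.} I expect the hardest parts to be Step 2 (the min-max bookkeeping for $i>1$) and this quantifier issue in Step 3.
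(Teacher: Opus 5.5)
Your Step 1 is sound. The first-order coefficient is $2c\|\hat r\|^2$ rather than $2|c|^2\|\hat r\|^2$, but choosing $t$ with the sign of $c$ takes care of that.

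\textbf{The gap in Step 2.} The test space $W_k=\text{span}\{x_1^{(n_k)},\dots,x_{i-1}^{(n_k)},x_i^{(n_k)}-t\,r_k\}$ gives no contradiction whenever $\hat\rho_j=\hat\rho_i$ for some $j<i$. In that case $x_j^{(n_k)}\in W_k$ forces $\max_{W_k}\rho\geq\rho_j^{(n_k)}\geq\hat\rho_j=\hat\rho_i$, and re-orthogonalizing $r_k$ does not change the space. Such ties cannot be ruled out before you know the limits are eigenpairs, and multiple or tightly clustered eigenvalues are exactly this paper's setting.

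The repair is to let $l$ be the smallest index with $\hat\rho_l=\hat\rho_i$. Then bound $\rho_l^{(n_k+1)}$ via the $l$-dimensional space $\text{span}\{x_1^{(n_k)},\dots,x_{l-1}^{(n_k)},x_i^{(n_k)}-t\,r_k\}\subseteq S^{(n_k)}$. With the gaps $\hat\rho_j<\hat\rho_l$ now strict, no induction is needed:
\begin{itemize}
\item After $B$-orthogonalizing $r_k$ against $x_1^{(n_k)},\dots,x_{l-1}^{(n_k)}$, the limit of $A-\hat\rho_lB$ projected onto this space is the negative definite block $\text{diag}(\hat\rho_j-\hat\rho_l)$. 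It is bordered by a column of size $O(t)$, with corner entry $-2tc\|\hat r\|^2+O(t^2)$.
\item The corner computation uses only $\hat x_j^T\hat r=0$. This follows from the $A$- and $B$-orthogonality of the limiting Ritz vectors, not from any eigen-relation.
\item The Schur complement is negative for small $t$ of the sign of $c$, since the cross terms enter only at order $t^2$. So every Ritz value of the limit space lies strictly below $\hat\rho_l$.
\item By continuity, $\rho_l^{(n_k+1)}\le\max_{W_k}\rho<\hat\rho_l$ for large $k$, which is a contradiction.
\end{itemize}
This is in substance the paper's argument. It projects onto $\{\hat x_1,\dots,\hat x_j,\hat r\}$ and counts $j$ negative eigenvalues of the bordered matrix $\hat A-\hat\lambda_j\hat B$ by inertia. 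Its ``without loss of generality $\hat\lambda_1<\dots<\hat\lambda_j$'' is the same first-index reduction.

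Dropping the induction is not cosmetic. Under the hypothesis as literally stated, the limits of $x_j^{(k)}$, $j<i$, along the subsequence chosen for index $i$ need not be limits along which the hypothesis for index $j$ holds. So the eigen-relation you invoke is not available there, while the inertia argument only needs those limits to exist. Similarly, a zero limit of the $B$-orthogonalized direction would put $\hat r$ in $\text{span}\{\hat x_j\}$, not $B\,\text{span}\{\hat x_j\}$, and $\hat r=0$ then follows from $\hat x_j^T\hat r=0$ alone.

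\textbf{Step 3.} Your concern here is legitimate, and the paper passes over it. It only says the residual statement follows as in Theorem 3.2 of Golub and Ye. That argument applies the eigenvector conclusion to an arbitrary convergent subsequence of $x_i^{(k)}$. This requires the hypothesis on $y_i^{(k)}$ and $\theta_i^{(k)}$ along a further subsequence of every such subsequence. From the literal hypothesis, the argument yields only that $\hat\rho_i$ is an eigenvalue and $\liminf_k\|(A-\hat\rho_iB)x_i^{(k)}\|=0$, not the full limit. So the strengthened reading you adopt is what the stated conclusion actually needs; this is a weakness of the statement rather than of your plan.
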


    Lemma 2.1.2 of \cite{Quillen_2005} originally assumes \[\{x_1^{(k)},\dots,x_b^{(k)},(A-\rho_1^{(k)}B)x_1^{(k)},\dots,(A-\rho_b^{(k)}B)x_b^{(k)}\}\subseteq S^{(k)}.\] We show convergence holds when we replace the search direction $(A-\rho_i^{(k)}B)x_i^{(k)}$ with ${(A-\theta_i^{(k)}B)y_i^{(k)}}$ for sequences $\theta_i^{(k)}$ with $\lim\theta_i^{(n_k)}=\lim\rho_i^{(k)}$ and $y_i^{(k)}$ with a subsequence converging to the same limit as a subsequence of $x_i^{(k)}$. Other methods resulting in sequences meeting this criteria may be explored as future research.
    
    \begin{proof}
        By Lemma 2.1.1 of \cite{Quillen_2005}, each sequence $\rho_i^{(k)}$ is convergent, say $\lim\rho_i^{(k)}=\hat{\lambda}_i$ for $i=1,\dots,b$. We follow the proof for Lemma 2.1.2 of \cite{Quillen_2005}, which follows the proof of Theorem 3.2 of \cite{Golub_Ye_2002} for the single vector method. As each sequence $x_i^{(k)}$ is bounded, each has a convergent subsequence $x_i^{(n_k)}$. Let $\lim x_i^{(n_k)}=\hat{x_i}$, $1\leq i\leq b$. By our assumption, each sequence $y_i^{(k)}$ contains a convergent subsequence, say $y_i^{(m_k)}$, where $\lim y_i{(m_k)}=c_i\hat{x_i}$, $1\leq i\leq b$, for some non-zero $c_i\in\mathbb{R}$.
        
        Fix $j$ such that $1\leq j\leq b$. Let $\hat{r}=(A-\hat{\lambda}B)\hat{y_j}=c_j(A-\hat{\lambda}B)\hat{x_j}$. Suppose $\hat{r}\not=0$, otherwise we are done. Following Lemma 2.1.2 by considering the projection $(\hat{A},\hat{B})$ of $(A,B)$ onto $\{\hat{x}_1,\cdots,\hat{x}_b,\hat{r}\}$ and assuming without loss of generality that $\hat{\lambda}_1<\dots<\hat{\lambda}_j$, we obtain the matrix
        \[\hat{A}-\hat{\lambda}_j\hat{B}=
        \left(\begin{matrix}\hat{\lambda}_1-\hat{\lambda}j & & & & * \\
        & \ddots & & & \vdots\\
        & & \hat{\lambda}_{j-1}-\hat{\lambda}_j & & *\\
        & & & 0 & \frac{1}{c}\hat{r}^T\hat{r}\\
        * & \dots & * & \frac{1}{c}\hat{r}^T\hat{r} & \hat{r}^T(A-\hat{\lambda}B)\hat{r}\end{matrix}\right),\]
        which has $j$ negative eigenvalues and conclude $\hat{\lambda}_j$ is strictly greater than the $j^{th}$ smallest eigenvalue of $(\hat{A},\hat{B})$, say $\tilde{\lambda}_j$.
        
        Let $r_j^{(k)}=(A-\theta_j^{(k)}B)y_j^{(k)}$ for all $k$, and let $(\hat{A}^{(k)},\hat{B}^{(k)})$ be the projection of $(A,B)$ onto $\{x_1^{(k)},\cdots,x_b^{(k)},\hat{r}_j^{(k)}\}$. We then have $(\hat{A}^{(m_k)},\hat{B}^{(m_k)})\rightarrow(\hat{A},\hat{B})$ as $k\rightarrow\infty$ and let $\hat{\rho}_j^{(m_k+1)}$ denote the $j^{th}$ eigenvalue of $(\hat{A}^{(m_k)},\hat{B}^{(m_k)})$. Then, as $\{x_1^{(k)},\cdots,x_b^{(k)},r_j^{(k)}\}\subseteq S^{(k)}$, we note that $\rho_j^{(k+1)}\leq\hat{\rho}_j^{(m_k+1)}$. Then
        \[\hat{\lambda}_j=\lim\rho_j^{(k+1)}\leq\lim\hat{\rho}_j^{(m_k+1)}=\tilde{\lambda_j},\]
        which contradicts our previous conclusion. Thus, $\hat{r}=0$ and so $(\hat{\lambda},\hat{x})$ is an eigenpair of $(A,B)$.
        
        The proof of $||(A-\hat{\lambda}B)x_k||\rightarrow0$ is the same as in Theorem 3.2 of \cite{Golub_Ye_2002}.
    \end{proof}
    We now show assumptions hold for each of our proposed accelerations for bounded $\beta_k$, noting that $x_k$ is bounded.
    
    \subsubsection{Depth-1 extrapolation}
    For Depth-1 extrapolation, we have $y_{k}=x_{k}+\beta_k(x_k-x_{k-1})$, and $\theta_k=\rho_k$.  As $x_k$ and $\beta_k$ are bounded, $y_{k}$ is bounded and thus has a convergent subsequence $\lim y_{n_k}$. Let $\lim y_{n_k}=\hat{y}$. $x_k$ is bounded, so $x_{n_k}$ has a convergent subsequence $x_{m_{n_k}}$. Let $\lim x_{m_{n_k}}=\hat{x}$. \[\hat{y}=\lim_{k\to\infty}y_{m_{n_k}}=\lim_{k\to\infty}x_{m_{n_k}}+\beta_k(x_{m_{n_k}}-x_{m_{n_k}-1})=\hat{x}.\] Trivially, $\lim_{k\to\infty}\theta_{n_k}=\lim_{k\to\infty}\rho_k$. Thus Depth-1 extrapolation meets the criteria of Theorem 1.
    \subsubsection{Nesterov-like}
    For Nesterov-like acceleration, we have $y_{k}=x_{k}+\beta_k(x_k-x_{k-1})$ and $\theta_k=\rho(y_k)$. We have shown this choice of $y_k$ meets our criteria and now must show $\lim\theta_{n_k}=\lim\rho_k$. Using the convergence of $y_{n_k}$, we have
    \[\lim_{k\to\infty}\theta_{n_k}=\lim_{k\to\infty}\rho(y_{n_k})=\rho(\hat{x})=\lim_{k\to\infty}\rho_k.\]
    Thus Nesterov-like acceleration meets the necessary criteria.
    \subsubsection{Heavy Ball-like}
    For heavy-ball-like acceleration, we have $y_{k}=x_{k}+\beta_ky_{k-1}$ and $\theta_k=\rho_k$. We additionally assume $\beta_k$ converges to some $\beta\not=1$ and $|\beta_k|<1$ for all $k$. Then $y_k$ is bounded and has a convergent subsequence $y_{n_k}$. Let $\lim_{k\to\infty}y_{n_k}=\hat{y}$. $x_k$ is bounded, so $x_{n_k}$ has a convergent subsequence $x_{m_{n_k}}$. Let $\lim x_{m_{n_k}}=\hat{x}$.
    \[\hat{y}=\lim_{k\to\infty}y_{m_{n_k}}=\lim_{k\to\infty}x_{m_{n_k}}+\beta_k y_{m_{n_k}-1}=\hat{x}+\beta\hat{y}\]
    Then $\hat{y}=\frac{1}{1-\beta}\hat{x}$ and heavy-ball-like acceleration meets the necessary criteria.
    
\section{Numerical results for the block version}
    \subsection{Example 1}

    \begin{figure}
        \centering
        \includegraphics[width=0.49\textwidth]{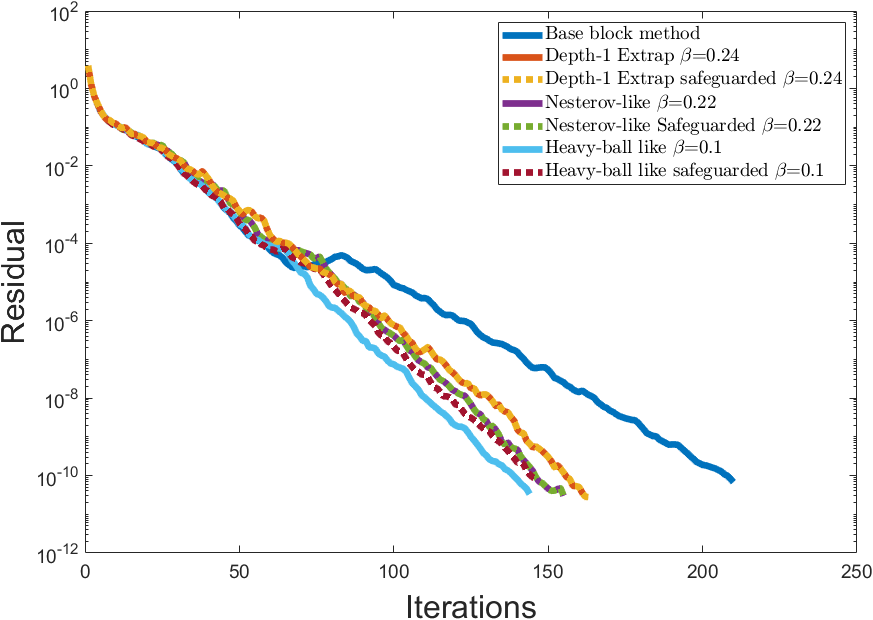}
        \includegraphics[width=0.49\textwidth]{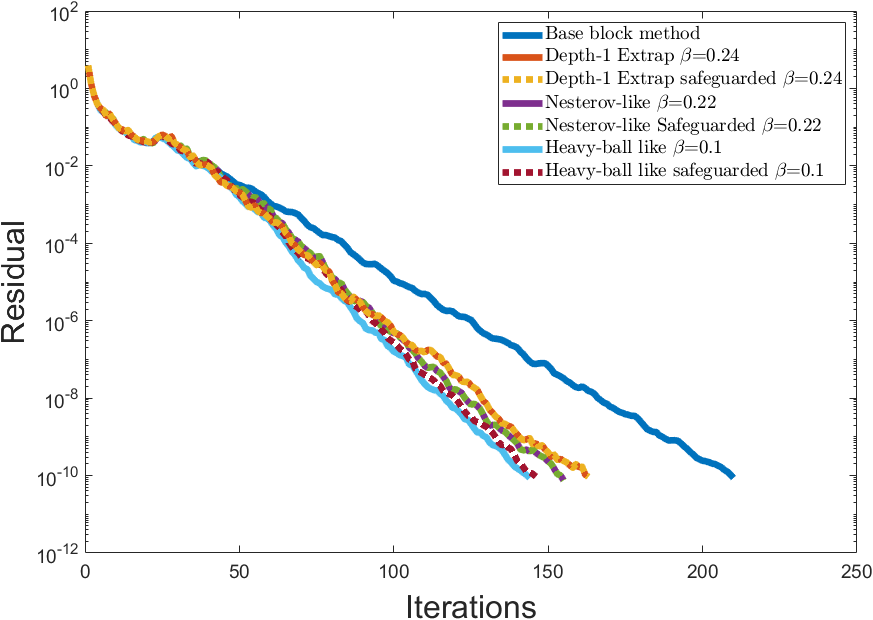}
        \caption{Residual convergence solving for the two smallest eigenvalues, $m=2$. Left: $\lambda_1$, right: $\lambda_2$.}
    \end{figure}
    
    \begin{figure}[!h]
        \centering
        \includegraphics[width=0.49\textwidth]{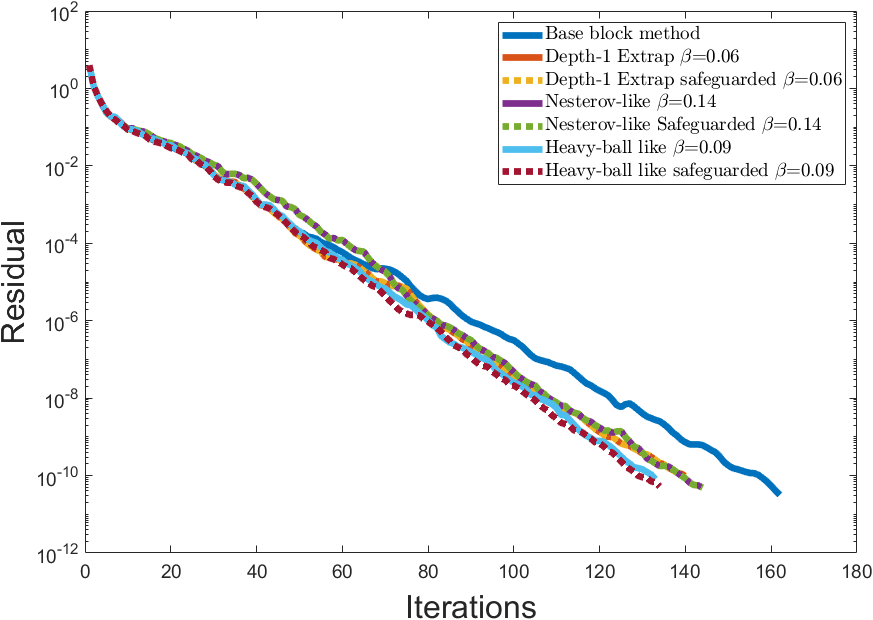}
        \includegraphics[width=0.49\textwidth]{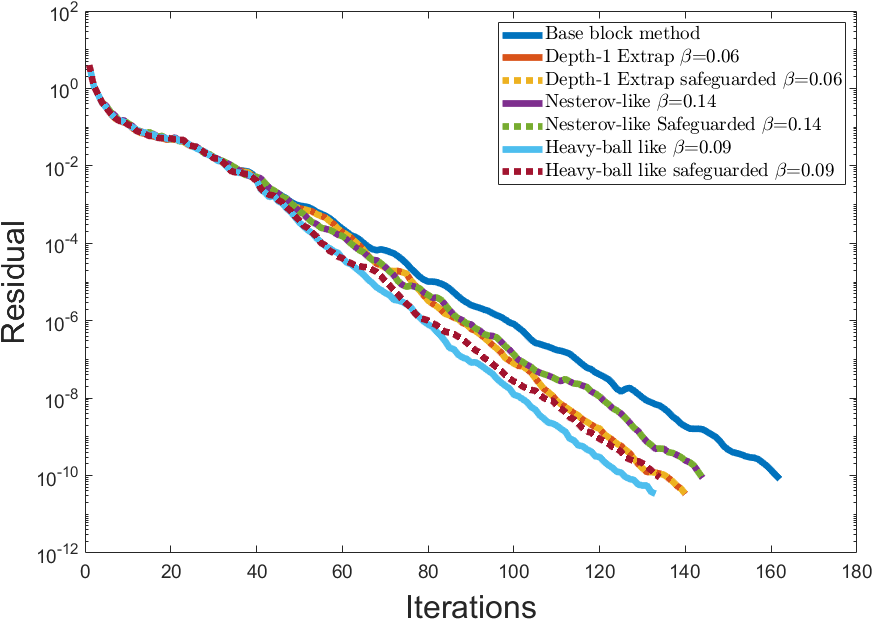}
        \caption{Residual convergence solving for the two smallest eigenvalues, $m=5$. Left: $\lambda_1$, right: $\lambda_2$.}
    \end{figure}

    We use the previous Laplacian problem \eqref{eqn:laplace} with a barbell mesh from section 4 to demonstrate the convergence of the accelerated block methods compared to the original block inverse-free Krylov subspace method. We use the same Matlab general implementation for all methods, using the appropriate subspace for each, and use the same randomly generated initial iterate for each method. We compare total number of iterations needed to converge to the smallest two eigenpairs within a tolerance of $10^{-10}$ and show convergence results for the two smallest eigenvalues for $m=2$ and $m=5$. $\beta_k$ values were hand-picked optimally for the fixed methods. Results are shown in Figure 8 for $m=2$ and in Figure 9 for $m=5$. The figures show the residual convergence of the smallest two eigenvalues of the problem.
    
    We observed the following behavior. For the various choices of $m$, the best hand-picked choice for Nesterov-like acceleration was usually around $\beta=0.25$ and for heavy-ball-like acceleration stayed smaller around $\beta=0.1$, while the choice for the depth-1 extrapolation method ranged from $\beta=0.06$ to around $\beta=0.25$. We observed good acceleration for the following ranges for the methods: $\beta<0.6$ for depth-1 extrapolation and Nesterov-like and $\beta<0.3$ for heavy-ball-like. Again, the best performing $\beta$ was so small that the fixed and safeguarded methods were often the same for the depth-1 extrapolation and Nesterov-like methods. Acceleration was more significant for the block method than when solving for a single eigenpair. For $m=2$, the accelerated methods converged in about two-thirds as many iterations than the original method well chosen $\beta$. As the Krylov subspace size $m$ increased, we saw less effect from accelerating the original method, with the accelerated methods converging in around 80\% to 100\% of the total number of iterations than the original method for $m=5$.

    \subsection{Example 2}

    \begin{figure}
        \centering
        \includegraphics[width=0.45\textwidth]{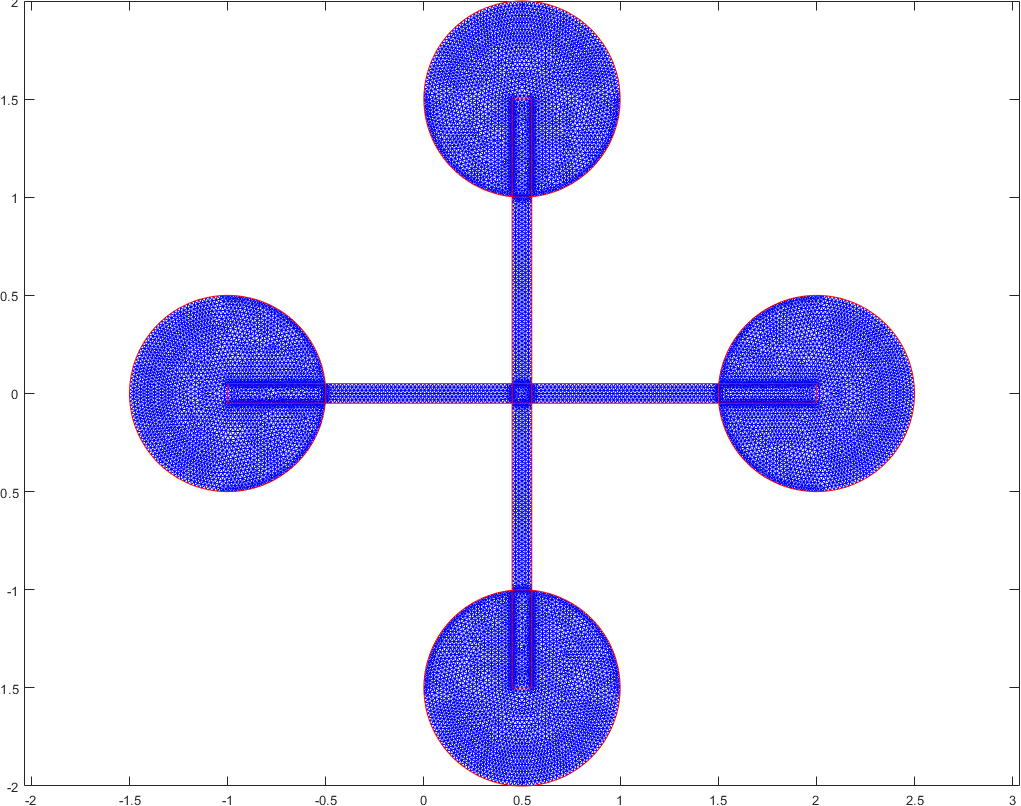}
        \caption{Mesh for the four barbell shaped domain}
    \end{figure}
    \begin{figure}
        \centering
        \includegraphics[width=0.49\textwidth]{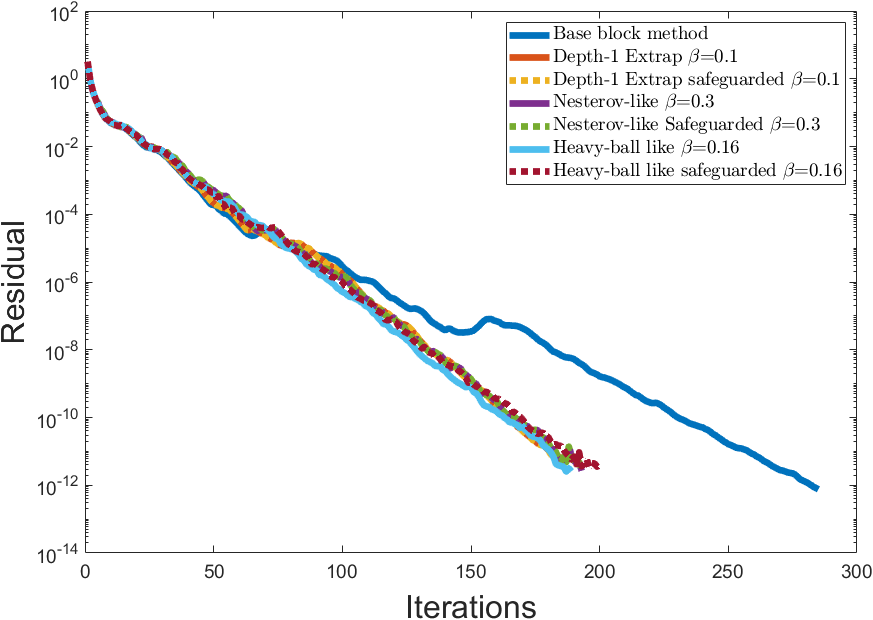}
        \includegraphics[width=0.49\textwidth]{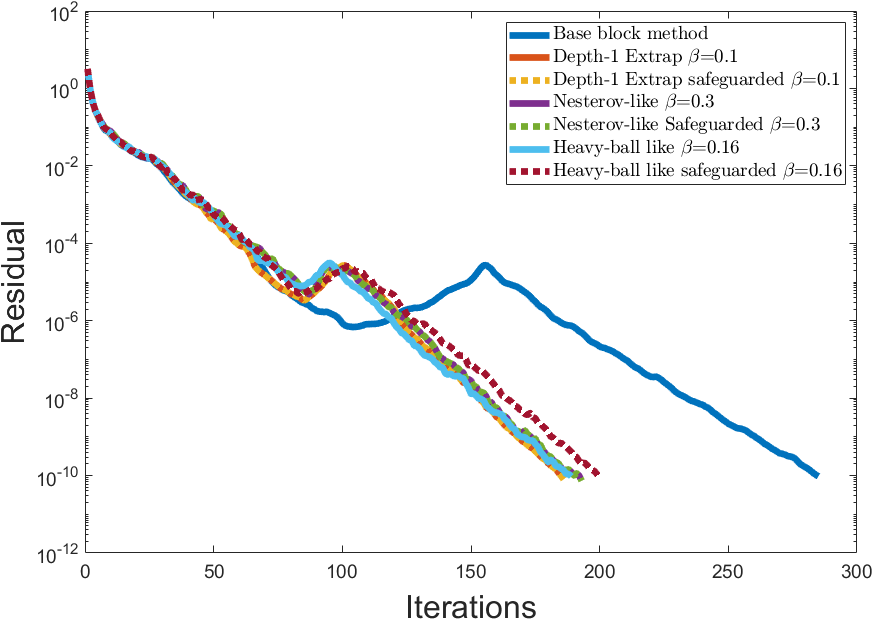}
        \includegraphics[width=0.49\textwidth]{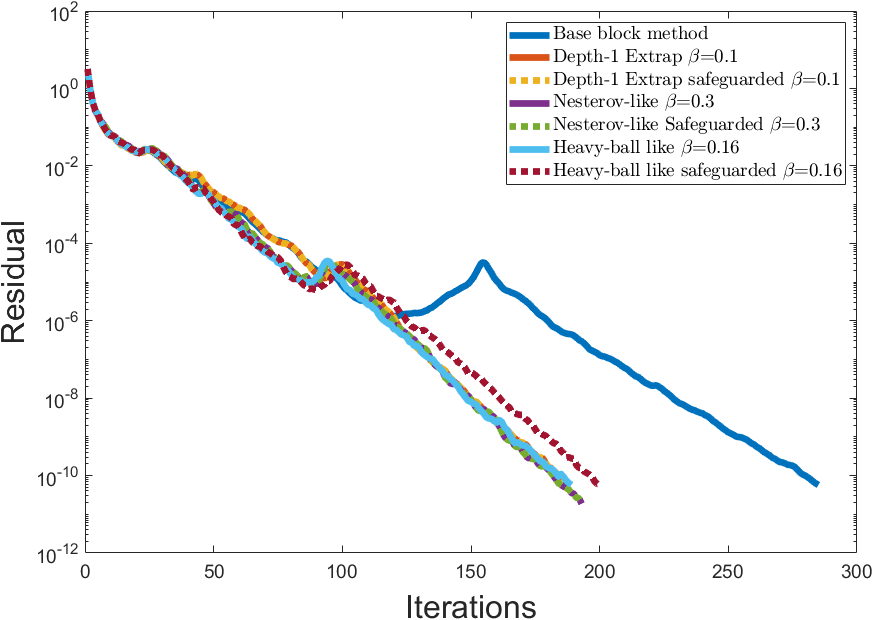}
        \includegraphics[width=0.49\textwidth]{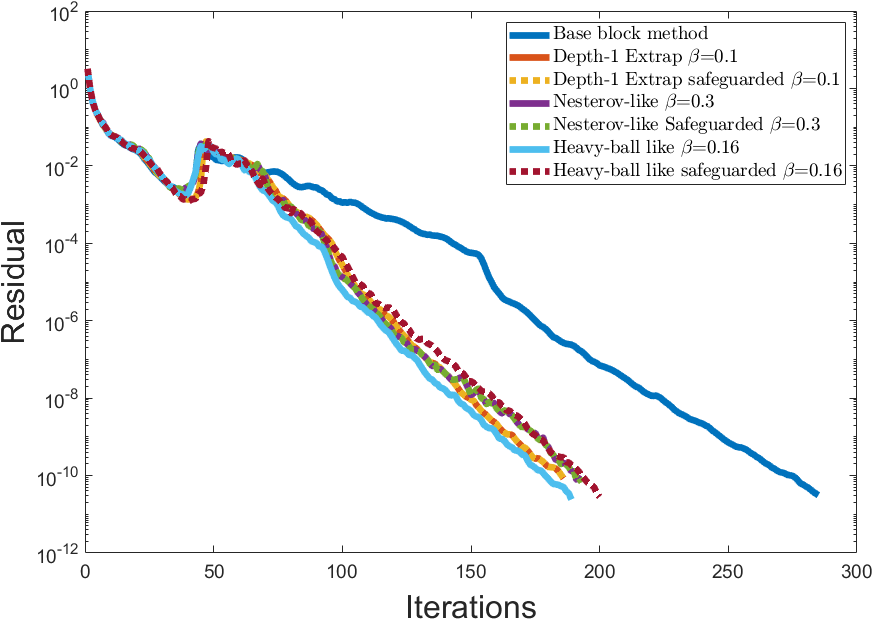}
        \caption{Residual convergence solving for the four smallest eigenvalues, $m=2$. Top left: $\lambda_1$, top right: $\lambda_2$, bottom left: $\lambda_3$, bottom right: $\lambda_4$.}
    \end{figure}

     \noindent Here we consider the Laplace eigenvalue problem \eqref{eqn:laplace} with a four barbell shaped domain, shown in Figure 9. This results in matrices of dimension $11549\times11549$ whose pencil has eigenvalue clusters of size four. For each method, we use the same random initial iterate and solve for the four smallest eigenvalues. We again choose the best $\beta$ seen for the fixed methods. Results comparing the accelerated block methods to the base block method are shown in Figure 10 for $m=2$ and Figure 11 for $m=5$. The figures show the residual convergence of the four smallest eigenvalues.
     
     For each method, we observed similar ranges for the various choices of $\beta$ as the previous example, including which $\beta$ values performed best. For $m=2$, we observe similar results as the first barbell problem with the accelerated methods converging in about two-thirds as many iterations as the original method for similar choices of $\beta_k$. For $m=5$, we observed more effective acceleration than the previous example, with the accelerated methods converging in around 75\% to 85\% of the total iterations for the original method for good choices of $\beta$. The accelerated methods appear to be more effective as the number of clustered eigenvalues we are solving for increases.

    \begin{figure}
        \centering
        \includegraphics[width=0.49\textwidth]{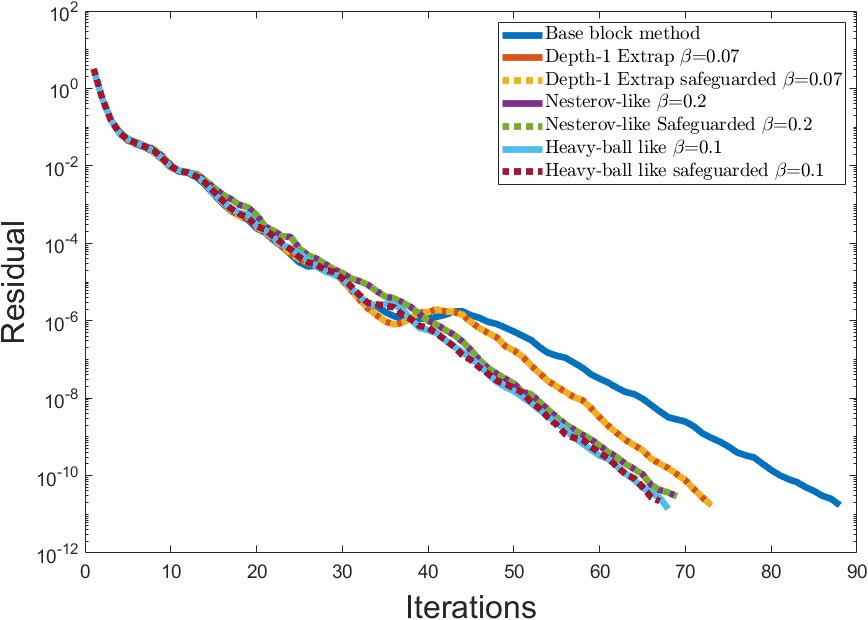}
        \includegraphics[width=0.49\textwidth]{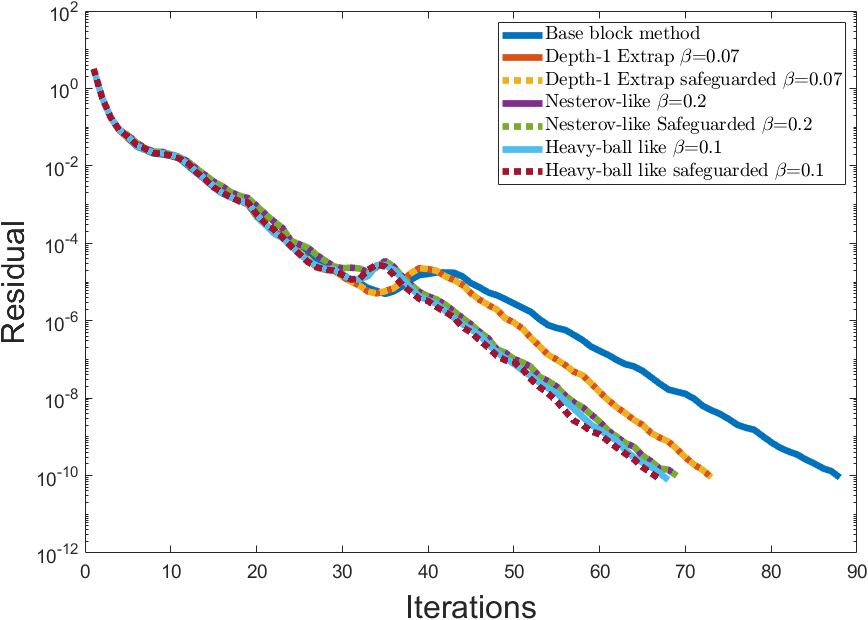}
        \includegraphics[width=0.49\textwidth]{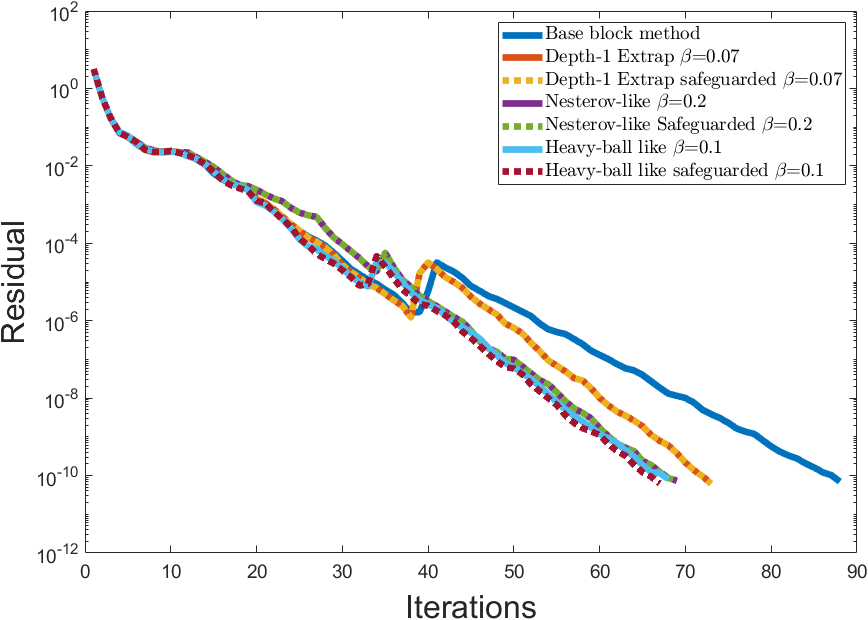}
        \includegraphics[width=0.49\textwidth]{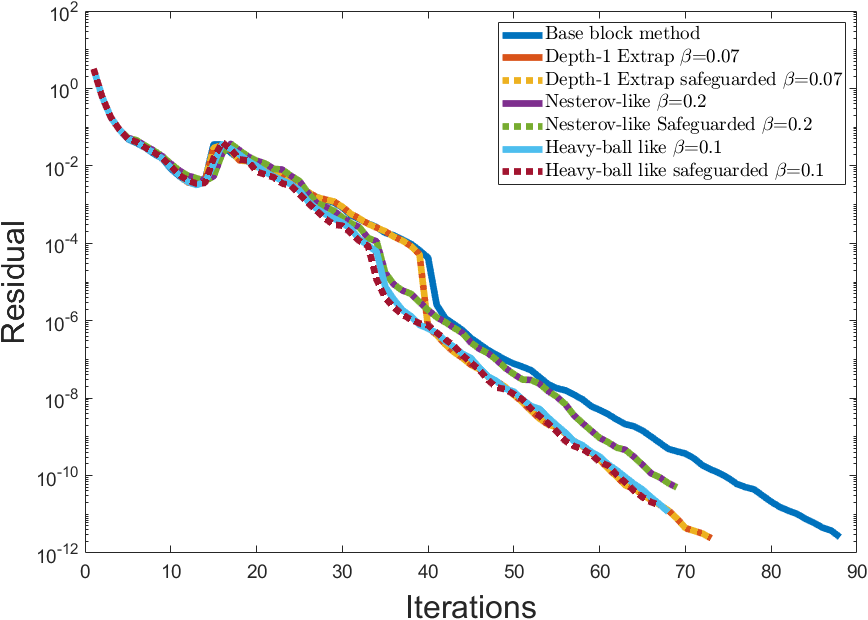}
        \caption{Residual convergence solving for the four smallest eigenvalues, $m=5$. Top left: $\lambda_1$, top right: $\lambda_2$, bottom left: $\lambda_3$, bottom right: $\lambda_4$.}
    \end{figure}

    \subsection{Further results}

    We present averaged results for seven eigenvalue problems, including the previous barbell and four-barbell examples. For the five new problems, we use $B=I$ with an $n\times n$ symmetric matrix from the SuiteSparse matrix collection \cite{sparsesuite} as $A$. The matrices used are {\tt SiH4} with $n=5041$, {\tt SiNA} with $n=5743$, the negation of {\tt benzene} with $n=8217$, {\tt Si10H16} with $n=17077$, and {\tt Gee99H100} with $n=112985$. Each eigenvalue problem is solved by the base and accelerated methods using the same randomly generated initial iterate for $b$ many algebraically smallest eigenvalues with a tolerance of $10^{-10}$. The blocksize $b$ is chosen for each problem to include clustered eigenvalues. For some matrices, the smallest eigenvalue is not in the cluster. For example, for matrix {\tt SiH4}, the distance between the first and second smallest eigenvalues is $0.37$ while the distance between the second and third smallest eigenvalues is $3.76e^{-12}$. For {\tt SiNa}, {\tt SiH4}, and the negation of {\tt benzene}, the distance between the smallest eigenvalue and $b^{th}$ smallest eigenvalue is relatively large with distances around $0.4$. {\tt Si10H16} and {\tt Ge99H100} had distances of $0.088$ and $0.028$ respectively. For the barbell problems, the distance is quite small at around $10^{-4}$. We present the mean and standard deviation of 50 trials for each problem in Table 1 for $m=1$, Table 2 for $m=2$, and Table 3 for $m=5$.
    
    We observed the following results. The Depth-1 and Nesterov-like methods performed similarly for each value of $\beta$ and performed best for $\beta=0.1$ or $\beta=0.25$. The heavy-ball-like method performed best for $\beta=0.1$, which usually converged in the fewest iterations of all the methods. For small $m$, we saw significant decrease in total iterations from accelerating the base method, but as $m$ increases, the accelerated methods perform more closely to the base method. The accelerated methods also saw more significant decreases in total iterations when the eigenvalues were more closely clustered and the size of the matrices were larger. The accelerated methods were also more stable with respect to initial iterate than the base method, which had higher standard deviations across all problems.

    \begin{table}[]
\centering
\caption{Average number of iterations (top) and standard deviation (bottom) to residual convergence of $10^{-10}$ with $m=1$ for various fixed $\beta$. Here, the base method is equivalent to LOBPCG.}
\begin{tabular}{|l|l|llll|llll|lll|}
\hline
Method & Base & \multicolumn{4}{l|}{Depth-1} & \multicolumn{4}{l|}{Nesterov-like} & \multicolumn{3}{l|}{Heavy-ball-like}\\ \hline
\backslashbox{\rule{0pt}{4ex}Problem, $b$\kern-1.8em}{\kern-1.8em$\beta$} & 0 & \multicolumn{1}{l|}{0.1} & \multicolumn{1}{l|}{0.25} & \multicolumn{1}{l|}{0.5} & 0.75 & \multicolumn{1}{l|}{0.1} & \multicolumn{1}{l|}{0.25} & \multicolumn{1}{l|}{0.5} & 0.75 & \multicolumn{1}{l|}{0.1} & \multicolumn{1}{l|}{0.2} & 0.3\\ \hline
2-barbell, 2 & \begin{tabular}[c]{@{}l@{}}373\\ 75\end{tabular} & \multicolumn{1}{l|}{\begin{tabular}[c]{@{}l@{}}306\\ 21\end{tabular}} & \multicolumn{1}{l|}{\begin{tabular}[c]{@{}l@{}}314\\ 15\end{tabular}} & \multicolumn{1}{l|}{\begin{tabular}[c]{@{}l@{}}340\\ 18\end{tabular}} & \begin{tabular}[c]{@{}l@{}}367\\ 16\end{tabular} & \multicolumn{1}{l|}{\begin{tabular}[c]{@{}l@{}}308\\ 27\end{tabular}} & \multicolumn{1}{l|}{\begin{tabular}[c]{@{}l@{}}312\\ 12\end{tabular}} & \multicolumn{1}{l|}{\begin{tabular}[c]{@{}l@{}}339\\ 16\end{tabular}} & \begin{tabular}[c]{@{}l@{}}369\\ 16\end{tabular} & \multicolumn{1}{l|}{\begin{tabular}[c]{@{}l@{}}290\\ 25\end{tabular}} & \multicolumn{1}{l|}{\begin{tabular}[c]{@{}l@{}}325\\ 10\end{tabular}} & \begin{tabular}[c]{@{}l@{}}362\\ 13\end{tabular} \\ \hline
{\tt SiH4}, 4                               & \begin{tabular}[c]{@{}l@{}}148\\ 18\end{tabular}   & \multicolumn{1}{l|}{\begin{tabular}[c]{@{}l@{}}120\\ 7\end{tabular}}   & \multicolumn{1}{l|}{\begin{tabular}[c]{@{}l@{}}123\\ 4\end{tabular}}   & \multicolumn{1}{l|}{\begin{tabular}[c]{@{}l@{}}133\\ 4\end{tabular}}   & \begin{tabular}[c]{@{}l@{}}140\\ 4\end{tabular}   & \multicolumn{1}{l|}{\begin{tabular}[c]{@{}l@{}}119\\ 5\end{tabular}}   & \multicolumn{1}{l|}{\begin{tabular}[c]{@{}l@{}}123\\ 4\end{tabular}}   & \multicolumn{1}{l|}{\begin{tabular}[c]{@{}l@{}}131\\ 4\end{tabular}}   & \begin{tabular}[c]{@{}l@{}}140\\ 4\end{tabular}   & \multicolumn{1}{l|}{\begin{tabular}[c]{@{}l@{}}113\\ 5\end{tabular}}   & \multicolumn{1}{l|}{\begin{tabular}[c]{@{}l@{}}120\\ 3\end{tabular}}   & \begin{tabular}[c]{@{}l@{}}64\\ 131\end{tabular}   \\ \hline
{\tt SiNa}, 3                               & \begin{tabular}[c]{@{}l@{}}273\\ 43\end{tabular} & \multicolumn{1}{l|}{\begin{tabular}[c]{@{}l@{}}249\\ 21\end{tabular}} & \multicolumn{1}{l|}{\begin{tabular}[c]{@{}l@{}}267\\ 19\end{tabular}}  & \multicolumn{1}{l|}{\begin{tabular}[c]{@{}l@{}}296\\ 20\end{tabular}}  & \begin{tabular}[c]{@{}l@{}}321\\ 21\end{tabular} & \multicolumn{1}{l|}{\begin{tabular}[c]{@{}l@{}}250\\ 24\end{tabular}}  & \multicolumn{1}{l|}{\begin{tabular}[c]{@{}l@{}}263\\ 20\end{tabular}} & \multicolumn{1}{l|}{\begin{tabular}[c]{@{}l@{}}294\\ 22\end{tabular}} & \begin{tabular}[c]{@{}l@{}}322\\ 21\end{tabular}  & \multicolumn{1}{l|}{\begin{tabular}[c]{@{}l@{}}240\\ 18\end{tabular}} & \multicolumn{1}{l|}{\begin{tabular}[c]{@{}l@{}}263\\ 18\end{tabular}}  & \begin{tabular}[c]{@{}l@{}}286\\ 18\end{tabular} \\ \hline
{\tt benzene}, 3                            & \begin{tabular}[c]{@{}l@{}}197\\ 21\end{tabular}   & \multicolumn{1}{l|}{\begin{tabular}[c]{@{}l@{}}174\\ 11\end{tabular}}   & \multicolumn{1}{l|}{\begin{tabular}[c]{@{}l@{}}177\\ 9\end{tabular}}   & \multicolumn{1}{l|}{\begin{tabular}[c]{@{}l@{}}191\\ 10\end{tabular}}   & \begin{tabular}[c]{@{}l@{}}203\\ 10\end{tabular}   & \multicolumn{1}{l|}{\begin{tabular}[c]{@{}l@{}}172\\ 10\end{tabular}}   & \multicolumn{1}{l|}{\begin{tabular}[c]{@{}l@{}}179\\ 10\end{tabular}}   & \multicolumn{1}{l|}{\begin{tabular}[c]{@{}l@{}}191\\ 10\end{tabular}}   & \begin{tabular}[c]{@{}l@{}}202\\ 9\end{tabular}   & \multicolumn{1}{l|}{\begin{tabular}[c]{@{}l@{}}164\\ 8\end{tabular}}   & \multicolumn{1}{l|}{\begin{tabular}[c]{@{}l@{}}179\\ 8\end{tabular}}   & \begin{tabular}[c]{@{}l@{}}198\\ 9\end{tabular}   \\ \hline
4-barbell, 4 & \begin{tabular}[c]{@{}l@{}}617\\ 78\end{tabular} & \multicolumn{1}{l|}{\begin{tabular}[c]{@{}l@{}}399\\ 47\end{tabular}} & \multicolumn{1}{l|}{\begin{tabular}[c]{@{}l@{}}382\\ 15\end{tabular}} & \multicolumn{1}{l|}{\begin{tabular}[c]{@{}l@{}}427\\ 16\end{tabular}} & \begin{tabular}[c]{@{}l@{}}472\\ 16\end{tabular} & \multicolumn{1}{l|}{\begin{tabular}[c]{@{}l@{}}411\\ 56\end{tabular}} & \multicolumn{1}{l|}{\begin{tabular}[c]{@{}l@{}}380\\ 15\end{tabular}} & \multicolumn{1}{l|}{\begin{tabular}[c]{@{}l@{}}428\\ 14\end{tabular}} & \begin{tabular}[c]{@{}l@{}}472\\ 17\end{tabular} & \multicolumn{1}{l|}{\begin{tabular}[c]{@{}l@{}}377\\ 41\end{tabular}} & \multicolumn{1}{l|}{\begin{tabular}[c]{@{}l@{}}389\\ 14\end{tabular}} & \begin{tabular}[c]{@{}l@{}}460\\ 17\end{tabular} \\ \hline
{\tt Si10H16}, 4                            & \begin{tabular}[c]{@{}l@{}}392\\ 45\end{tabular} & \multicolumn{1}{l|}{\begin{tabular}[c]{@{}l@{}}279\\ 18\end{tabular}} & \multicolumn{1}{l|}{\begin{tabular}[c]{@{}l@{}}285\\ 11\end{tabular}}  & \multicolumn{1}{l|}{\begin{tabular}[c]{@{}l@{}}121\\ 3\end{tabular}}  & \begin{tabular}[c]{@{}l@{}}348\\ 13\end{tabular}  & \multicolumn{1}{l|}{\begin{tabular}[c]{@{}l@{}}280\\ 22\end{tabular}}  & \multicolumn{1}{l|}{\begin{tabular}[c]{@{}l@{}}289\\ 13\end{tabular}}  & \multicolumn{1}{l|}{\begin{tabular}[c]{@{}l@{}}323\\ 12\end{tabular}}  & \begin{tabular}[c]{@{}l@{}}349\\ 13\end{tabular}  & \multicolumn{1}{l|}{\begin{tabular}[c]{@{}l@{}}265\\ 16\end{tabular}}  & \multicolumn{1}{l|}{\begin{tabular}[c]{@{}l@{}}293\\ 11\end{tabular}}  & \begin{tabular}[c]{@{}l@{}}336\\ 14\end{tabular}  \\ \hline
{\tt Ge99H100}, 4                            & \begin{tabular}[c]{@{}l@{}}754\\ 118\end{tabular} & \multicolumn{1}{l|}{\begin{tabular}[c]{@{}l@{}}595\\ 53\end{tabular}} & \multicolumn{1}{l|}{\begin{tabular}[c]{@{}l@{}}548\\ 32\end{tabular}} & \multicolumn{1}{l|}{\begin{tabular}[c]{@{}l@{}}607\\ 27\end{tabular}} & \begin{tabular}[c]{@{}l@{}}662\\ 23\end{tabular} & \multicolumn{1}{l|}{\begin{tabular}[c]{@{}l@{}}615\\ 67\end{tabular}} & \multicolumn{1}{l|}{\begin{tabular}[c]{@{}l@{}}549\\ 36\end{tabular}} & \multicolumn{1}{l|}{\begin{tabular}[c]{@{}l@{}}616\\ 28\end{tabular}}  & \begin{tabular}[c]{@{}l@{}}664\\ 21\end{tabular} & \multicolumn{1}{l|}{\begin{tabular}[c]{@{}l@{}}581\\ 47\end{tabular}} & \multicolumn{1}{l|}{\begin{tabular}[c]{@{}l@{}}564\\ 31\end{tabular}} & \begin{tabular}[c]{@{}l@{}}646\\ 24\end{tabular} \\ \hline
\end{tabular}
\end{table}

    For $m=1$, as the block size is fixed and we include the previous block of iterations, the base method is equivalent to LOBPCG of \cite{Knyazev_2000}, as noted in \cite{Quillen_Ye_2010}. The accelerated methods converged in fewer iterations than the base method on average for each problem for smaller $\beta$. For the smaller problems (barbell, {\tt SiNa}, {\tt SiH4}, and the negation of {\tt benzene}), the accelerated methods performed best on average for $\beta=0.1$, with the Depth-1 and Nesterov-like methods converging between $80\%-92\%$ of the base method's total iterations and the heavy-ball-like between $76\%-88\%$ of the base method's total iterations. For the four barbell problem, the Depth-1 and Nesterov-like methods performed best on average for $\beta=0.25$, converging in $62\%$ of the base method's total iterations, while the heavy-ball-like method performed best on average for $\beta=0.1$, converging in $61\%$ of the base method's total iterations. For {\tt Si10H16}, all three methods performed best on average with $\beta=.01$ in $68\%-71\%$ of the base method's total iterations. Lastly, for {\tt Ge99H100}, all three methods performed best on average with $\beta=.25$ in $73\%-75\%$ of the base method's total iterations.

    \begin{table}[]
\centering
\caption{Average number of iterations (top) and standard deviation (bottom) to residual convergence of $10^{-10}$ with $m=2$ for various fixed $\beta$.}
\begin{tabular}{|l|l|llll|llll|lll|}
\hline
Method & Base & \multicolumn{4}{l|}{Depth-1} & \multicolumn{4}{l|}{Nesterov-like} & \multicolumn{3}{l|}{Heavy-ball-like}\\ \hline
\backslashbox{\rule{0pt}{4ex}Problem, $b$\kern-1.8em}{\kern-1.8em$\beta$} & 0 & \multicolumn{1}{l|}{0.1} & \multicolumn{1}{l|}{0.25} & \multicolumn{1}{l|}{0.5} & 0.75 & \multicolumn{1}{l|}{0.1} & \multicolumn{1}{l|}{0.25} & \multicolumn{1}{l|}{0.5} & 0.75 & \multicolumn{1}{l|}{0.1} & \multicolumn{1}{l|}{0.2} & 0.3\\ \hline
2-barbell, 2                         & \begin{tabular}[c]{@{}l@{}}174\\ 36\end{tabular} & \multicolumn{1}{l|}{\begin{tabular}[c]{@{}l@{}}151\\ 10\end{tabular}} & \multicolumn{1}{l|}{\begin{tabular}[c]{@{}l@{}}154\\ 7\end{tabular}}  & \multicolumn{1}{l|}{\begin{tabular}[c]{@{}l@{}}166\\ 7\end{tabular}}  & \begin{tabular}[c]{@{}l@{}}173\\ 8\end{tabular}  & \multicolumn{1}{l|}{\begin{tabular}[c]{@{}l@{}}154\\ 10\end{tabular}} & \multicolumn{1}{l|}{\begin{tabular}[c]{@{}l@{}}153\\ 8\end{tabular}}  & \multicolumn{1}{l|}{\begin{tabular}[c]{@{}l@{}}164\\ 6\end{tabular}}  & \begin{tabular}[c]{@{}l@{}}178\\ 8\end{tabular}  & \multicolumn{1}{l|}{\begin{tabular}[c]{@{}l@{}}142\\ 9\end{tabular}}  & \multicolumn{1}{l|}{\begin{tabular}[c]{@{}l@{}}154\\ 5\end{tabular}}  & \begin{tabular}[c]{@{}l@{}}172\\ 5\end{tabular}  \\ \hline
{\tt SiH4}, 4                               & \begin{tabular}[c]{@{}l@{}}62\\ 5\end{tabular}   & \multicolumn{1}{l|}{\begin{tabular}[c]{@{}l@{}}59\\ 2\end{tabular}}   & \multicolumn{1}{l|}{\begin{tabular}[c]{@{}l@{}}66\\ 2\end{tabular}}   & \multicolumn{1}{l|}{\begin{tabular}[c]{@{}l@{}}66\\ 2\end{tabular}}   & \begin{tabular}[c]{@{}l@{}}70\\ 2\end{tabular}   & \multicolumn{1}{l|}{\begin{tabular}[c]{@{}l@{}}59\\ 3\end{tabular}}   & \multicolumn{1}{l|}{\begin{tabular}[c]{@{}l@{}}62\\ 2\end{tabular}}   & \multicolumn{1}{l|}{\begin{tabular}[c]{@{}l@{}}67\\ 2\end{tabular}}   & \begin{tabular}[c]{@{}l@{}}70\\ 2\end{tabular}   & \multicolumn{1}{l|}{\begin{tabular}[c]{@{}l@{}}57\\ 2\end{tabular}}   & \multicolumn{1}{l|}{\begin{tabular}[c]{@{}l@{}}59\\ 2\end{tabular}}   & \begin{tabular}[c]{@{}l@{}}64\\ 2\end{tabular}   \\ \hline
{\tt SiNa}, 3                               & \begin{tabular}[c]{@{}l@{}}119\\ 14\end{tabular} & \multicolumn{1}{l|}{\begin{tabular}[c]{@{}l@{}}119\\ 10\end{tabular}} & \multicolumn{1}{l|}{\begin{tabular}[c]{@{}l@{}}126\\ 9\end{tabular}}  & \multicolumn{1}{l|}{\begin{tabular}[c]{@{}l@{}}137\\ 9\end{tabular}}  & \begin{tabular}[c]{@{}l@{}}145\\ 10\end{tabular} & \multicolumn{1}{l|}{\begin{tabular}[c]{@{}l@{}}117\\ 9\end{tabular}}  & \multicolumn{1}{l|}{\begin{tabular}[c]{@{}l@{}}125\\ 10\end{tabular}} & \multicolumn{1}{l|}{\begin{tabular}[c]{@{}l@{}}135\\ 10\end{tabular}} & \begin{tabular}[c]{@{}l@{}}145\\ 9\end{tabular}  & \multicolumn{1}{l|}{\begin{tabular}[c]{@{}l@{}}115\\ 11\end{tabular}} & \multicolumn{1}{l|}{\begin{tabular}[c]{@{}l@{}}122\\ 8\end{tabular}}  & \begin{tabular}[c]{@{}l@{}}132\\ 10\end{tabular} \\ \hline
{\tt benzene}, 3                            & \begin{tabular}[c]{@{}l@{}}84\\ 8\end{tabular}   & \multicolumn{1}{l|}{\begin{tabular}[c]{@{}l@{}}82\\ 4\end{tabular}}   & \multicolumn{1}{l|}{\begin{tabular}[c]{@{}l@{}}89\\ 4\end{tabular}}   & \multicolumn{1}{l|}{\begin{tabular}[c]{@{}l@{}}93\\ 4\end{tabular}}   & \begin{tabular}[c]{@{}l@{}}99\\ 4\end{tabular}   & \multicolumn{1}{l|}{\begin{tabular}[c]{@{}l@{}}82\\ 4\end{tabular}}   & \multicolumn{1}{l|}{\begin{tabular}[c]{@{}l@{}}87\\ 4\end{tabular}}   & \multicolumn{1}{l|}{\begin{tabular}[c]{@{}l@{}}93\\ 5\end{tabular}}   & \begin{tabular}[c]{@{}l@{}}99\\ 4\end{tabular}   & \multicolumn{1}{l|}{\begin{tabular}[c]{@{}l@{}}80\\ 3\end{tabular}}   & \multicolumn{1}{l|}{\begin{tabular}[c]{@{}l@{}}85\\ 3\end{tabular}}   & \begin{tabular}[c]{@{}l@{}}92\\ 3\end{tabular}   \\ \hline
4-barbell, 4                          & \begin{tabular}[c]{@{}l@{}}269\\ 25\end{tabular} & \multicolumn{1}{l|}{\begin{tabular}[c]{@{}l@{}}195\\ 16\end{tabular}} & \multicolumn{1}{l|}{\begin{tabular}[c]{@{}l@{}}187\\ 9\end{tabular}}  & \multicolumn{1}{l|}{\begin{tabular}[c]{@{}l@{}}201\\ 11\end{tabular}} & \begin{tabular}[c]{@{}l@{}}216\\ 6\end{tabular}  & \multicolumn{1}{l|}{\begin{tabular}[c]{@{}l@{}}198\\ 21\end{tabular}} & \multicolumn{1}{l|}{\begin{tabular}[c]{@{}l@{}}188\\ 6\end{tabular}}  & \multicolumn{1}{l|}{\begin{tabular}[c]{@{}l@{}}200\\ 6\end{tabular}}  & \begin{tabular}[c]{@{}l@{}}216\\ 6\end{tabular}  & \multicolumn{1}{l|}{\begin{tabular}[c]{@{}l@{}}179\\ 13\end{tabular}} & \multicolumn{1}{l|}{\begin{tabular}[c]{@{}l@{}}183\\ 6\end{tabular}}  & \begin{tabular}[c]{@{}l@{}}208\\ 7\end{tabular}  \\ \hline
{\tt Si10H16}, 4                            & \begin{tabular}[c]{@{}l@{}}166\\ 23\end{tabular} & \multicolumn{1}{l|}{\begin{tabular}[c]{@{}l@{}}135\\ 10\end{tabular}} & \multicolumn{1}{l|}{\begin{tabular}[c]{@{}l@{}}138\\ 6\end{tabular}}  & \multicolumn{1}{l|}{\begin{tabular}[c]{@{}l@{}}152\\ 6\end{tabular}}  & \begin{tabular}[c]{@{}l@{}}162\\ 6\end{tabular}  & \multicolumn{1}{l|}{\begin{tabular}[c]{@{}l@{}}133\\ 7\end{tabular}}  & \multicolumn{1}{l|}{\begin{tabular}[c]{@{}l@{}}139\\ 7\end{tabular}}  & \multicolumn{1}{l|}{\begin{tabular}[c]{@{}l@{}}152\\ 5\end{tabular}}  & \begin{tabular}[c]{@{}l@{}}162\\ 5\end{tabular}  & \multicolumn{1}{l|}{\begin{tabular}[c]{@{}l@{}}128\\ 4\end{tabular}}  & \multicolumn{1}{l|}{\begin{tabular}[c]{@{}l@{}}139\\ 5\end{tabular}}  & \begin{tabular}[c]{@{}l@{}}154\\ 5\end{tabular}  \\ \hline
{\tt Ge99H100}, 4                           & \begin{tabular}[c]{@{}l@{}}329\\ 28\end{tabular} & \multicolumn{1}{l|}{\begin{tabular}[c]{@{}l@{}}279\\ 23\end{tabular}} & \multicolumn{1}{l|}{\begin{tabular}[c]{@{}l@{}}257\\ 12\end{tabular}} & \multicolumn{1}{l|}{\begin{tabular}[c]{@{}l@{}}278\\ 10\end{tabular}} & \begin{tabular}[c]{@{}l@{}}297\\ 12\end{tabular} & \multicolumn{1}{l|}{\begin{tabular}[c]{@{}l@{}}276\\ 21\end{tabular}} & \multicolumn{1}{l|}{\begin{tabular}[c]{@{}l@{}}256\\ 10\end{tabular}} & \multicolumn{1}{l|}{\begin{tabular}[c]{@{}l@{}}276\\ 9\end{tabular}}  & \begin{tabular}[c]{@{}l@{}}298\\ 10\end{tabular} & \multicolumn{1}{l|}{\begin{tabular}[c]{@{}l@{}}260\\ 16\end{tabular}} & \multicolumn{1}{l|}{\begin{tabular}[c]{@{}l@{}}255\\ 10\end{tabular}} & \begin{tabular}[c]{@{}l@{}}290\\ 11\end{tabular} \\ \hline
\end{tabular}
\end{table}
    
    For $m=2$, the acceleration methods performed similarly or worse to the base method for {\tt SiNa}, {\tt SiH4}, and the negation of {\tt benzene}. For {\tt Si10H16}, {\tt Ge99H100}, and the barbell problems, the accelerated methods converged on average in fewer iterations for all test values of $\beta$. For {\tt Si10H16}, {\tt Ge99H100}, and the barbell problems, the Depth-1 and Nesterov-like methods performed best on average for $\beta=0.1$ or $\beta=0.25$, converging between $72\%-88\%$ of the base method's total iterations, while the heavy-ball-like method performed best on average for $\beta=0.1$, converging between $67\%-81\%$ of the base method's total iterations.

    \begin{table}[]
\centering
\caption{Average number of iterations (top) and standard deviation (bottom) to residual convergence of $10^{-10}$ with $m=5$ for various fixed $\beta$.}
\begin{tabular}{|l|l|llll|llll|lll|}
\hline
Method & Base & \multicolumn{4}{l|}{Depth-1} & \multicolumn{4}{l|}{Nesterov-like} & \multicolumn{3}{l|}{Heavy-ball-like}\\ \hline
\backslashbox{\rule{0pt}{4ex}Problem, $b$\kern-1.8em}{\kern-1.8em$\beta$} & 0 & \multicolumn{1}{l|}{0.1} & \multicolumn{1}{l|}{0.25} & \multicolumn{1}{l|}{0.5} & 0.75 & \multicolumn{1}{l|}{0.1} & \multicolumn{1}{l|}{0.25} & \multicolumn{1}{l|}{0.5} & 0.75 & \multicolumn{1}{l|}{0.1} & \multicolumn{1}{l|}{0.2} & 0.3\\ \hline
2-barbell, 2                         & \begin{tabular}[c]{@{}l@{}}57\\ 4\end{tabular} & \multicolumn{1}{l|}{\begin{tabular}[c]{@{}l@{}}57\\ 2\end{tabular}} & \multicolumn{1}{l|}{\begin{tabular}[c]{@{}l@{}}61\\ 2\end{tabular}} & \multicolumn{1}{l|}{\begin{tabular}[c]{@{}l@{}}66\\ 2\end{tabular}} & \begin{tabular}[c]{@{}l@{}}69\\ 2\end{tabular} & \multicolumn{1}{l|}{\begin{tabular}[c]{@{}l@{}}58\\ 3\end{tabular}} & \multicolumn{1}{l|}{\begin{tabular}[c]{@{}l@{}}61\\ 3\end{tabular}} & \multicolumn{1}{l|}{\begin{tabular}[c]{@{}l@{}}66\\ 3\end{tabular}} & \begin{tabular}[c]{@{}l@{}}70\\ 3\end{tabular} & \multicolumn{1}{l|}{\begin{tabular}[c]{@{}l@{}}54\\ 2\end{tabular}} & \multicolumn{1}{l|}{\begin{tabular}[c]{@{}l@{}}59\\ 2\end{tabular}} & \begin{tabular}[c]{@{}l@{}}63\\ 2\end{tabular} \\ \hline
{\tt SiH4}, 4                               & \begin{tabular}[c]{@{}l@{}}23\\ 1\end{tabular} & \multicolumn{1}{l|}{\begin{tabular}[c]{@{}l@{}}24\\ 1\end{tabular}} & \multicolumn{1}{l|}{\begin{tabular}[c]{@{}l@{}}26\\ 1\end{tabular}} & \multicolumn{1}{l|}{\begin{tabular}[c]{@{}l@{}}29\\ 1\end{tabular}} & \begin{tabular}[c]{@{}l@{}}31\\ 1\end{tabular} & \multicolumn{1}{l|}{\begin{tabular}[c]{@{}l@{}}24\\ 1\end{tabular}} & \multicolumn{1}{l|}{\begin{tabular}[c]{@{}l@{}}26\\ 1\end{tabular}} & \multicolumn{1}{l|}{\begin{tabular}[c]{@{}l@{}}29\\ 1\end{tabular}} & \begin{tabular}[c]{@{}l@{}}31\\ 1\end{tabular} & \multicolumn{1}{l|}{\begin{tabular}[c]{@{}l@{}}23\\ 1\end{tabular}} & \multicolumn{1}{l|}{\begin{tabular}[c]{@{}l@{}}25\\ 1\end{tabular}} & \begin{tabular}[c]{@{}l@{}}28\\ 1\end{tabular} \\ \hline
{\tt SiNa}, 3                               & \begin{tabular}[c]{@{}l@{}}43\\ 4\end{tabular} & \multicolumn{1}{l|}{\begin{tabular}[c]{@{}l@{}}45\\ 3\end{tabular}} & \multicolumn{1}{l|}{\begin{tabular}[c]{@{}l@{}}48\\ 3\end{tabular}} & \multicolumn{1}{l|}{\begin{tabular}[c]{@{}l@{}}50\\ 3\end{tabular}} & \begin{tabular}[c]{@{}l@{}}53\\ 3\end{tabular} & \multicolumn{1}{l|}{\begin{tabular}[c]{@{}l@{}}45\\ 4\end{tabular}} & \multicolumn{1}{l|}{\begin{tabular}[c]{@{}l@{}}48\\ 3\end{tabular}} & \multicolumn{1}{l|}{\begin{tabular}[c]{@{}l@{}}50\\ 3\end{tabular}} & \begin{tabular}[c]{@{}l@{}}53\\ 3\end{tabular} & \multicolumn{1}{l|}{\begin{tabular}[c]{@{}l@{}}46\\ 3\end{tabular}} & \multicolumn{1}{l|}{\begin{tabular}[c]{@{}l@{}}46\\ 3\end{tabular}} & \begin{tabular}[c]{@{}l@{}}49\\ 3\end{tabular} \\ \hline
{\tt benzene}, 3                            & \begin{tabular}[c]{@{}l@{}}31\\ 2\end{tabular} & \multicolumn{1}{l|}{\begin{tabular}[c]{@{}l@{}}33\\ 1\end{tabular}} & \multicolumn{1}{l|}{\begin{tabular}[c]{@{}l@{}}36\\ 2\end{tabular}} & \multicolumn{1}{l|}{\begin{tabular}[c]{@{}l@{}}39\\ 1\end{tabular}} & \begin{tabular}[c]{@{}l@{}}41\\ 2\end{tabular} & \multicolumn{1}{l|}{\begin{tabular}[c]{@{}l@{}}33\\ 1\end{tabular}} & \multicolumn{1}{l|}{\begin{tabular}[c]{@{}l@{}}36\\ 1\end{tabular}} & \multicolumn{1}{l|}{\begin{tabular}[c]{@{}l@{}}39\\ 1\end{tabular}} & \begin{tabular}[c]{@{}l@{}}42\\ 2\end{tabular} & \multicolumn{1}{l|}{\begin{tabular}[c]{@{}l@{}}32\\ 1\end{tabular}} & \multicolumn{1}{l|}{\begin{tabular}[c]{@{}l@{}}34\\ 1\end{tabular}} & \begin{tabular}[c]{@{}l@{}}37\\ 1\end{tabular} \\ \hline
4-barbell, 4                          & \begin{tabular}[c]{@{}l@{}}81\\ 8\end{tabular} & \multicolumn{1}{l|}{\begin{tabular}[c]{@{}l@{}}71\\ 4\end{tabular}} & \multicolumn{1}{l|}{\begin{tabular}[c]{@{}l@{}}73\\ 3\end{tabular}} & \multicolumn{1}{l|}{\begin{tabular}[c]{@{}l@{}}78\\ 3\end{tabular}} & \begin{tabular}[c]{@{}l@{}}83\\ 2\end{tabular} & \multicolumn{1}{l|}{\begin{tabular}[c]{@{}l@{}}70\\ 3\end{tabular}} & \multicolumn{1}{l|}{\begin{tabular}[c]{@{}l@{}}72\\ 3\end{tabular}} & \multicolumn{1}{l|}{\begin{tabular}[c]{@{}l@{}}79\\ 3\end{tabular}} & \begin{tabular}[c]{@{}l@{}}83\\ 2\end{tabular} & \multicolumn{1}{l|}{\begin{tabular}[c]{@{}l@{}}67\\ 3\end{tabular}} & \multicolumn{1}{l|}{\begin{tabular}[c]{@{}l@{}}70\\ 2\end{tabular}} & \begin{tabular}[c]{@{}l@{}}76\\ 3\end{tabular} \\ \hline
{\tt Si10H16}, 4                            & \begin{tabular}[c]{@{}l@{}}53\\ 5\end{tabular} & \multicolumn{1}{l|}{\begin{tabular}[c]{@{}l@{}}51\\ 2\end{tabular}} & \multicolumn{1}{l|}{\begin{tabular}[c]{@{}l@{}}54\\ 2\end{tabular}} & \multicolumn{1}{l|}{\begin{tabular}[c]{@{}l@{}}59\\ 2\end{tabular}} & \begin{tabular}[c]{@{}l@{}}63\\ 2\end{tabular} & \multicolumn{1}{l|}{\begin{tabular}[c]{@{}l@{}}51\\ 2\end{tabular}} & \multicolumn{1}{l|}{\begin{tabular}[c]{@{}l@{}}54\\ 2\end{tabular}} & \multicolumn{1}{l|}{\begin{tabular}[c]{@{}l@{}}59\\ 2\end{tabular}} & \begin{tabular}[c]{@{}l@{}}63\\ 2\end{tabular} & \multicolumn{1}{l|}{\begin{tabular}[c]{@{}l@{}}51\\ 2\end{tabular}} & \multicolumn{1}{l|}{\begin{tabular}[c]{@{}l@{}}54\\ 2\end{tabular}} & \begin{tabular}[c]{@{}l@{}}58\\ 2\end{tabular} \\ \hline
{\tt Ge99H100}, 4                           & \begin{tabular}[c]{@{}l@{}}111\\ 12\end{tabular} & \multicolumn{1}{l|}{\begin{tabular}[c]{@{}l@{}}98\\ 6\end{tabular}} & \multicolumn{1}{l|}{\begin{tabular}[c]{@{}l@{}}99\\ 3\end{tabular}} & \multicolumn{1}{l|}{\begin{tabular}[c]{@{}l@{}}106\\ 4\end{tabular}} & \begin{tabular}[c]{@{}l@{}}114\\ 4\end{tabular} & \multicolumn{1}{l|}{\begin{tabular}[c]{@{}l@{}}96\\ 6\end{tabular}} & \multicolumn{1}{l|}{\begin{tabular}[c]{@{}l@{}}99\\ 5\end{tabular}} & \multicolumn{1}{l|}{\begin{tabular}[c]{@{}l@{}}106\\ 4\end{tabular}} & \begin{tabular}[c]{@{}l@{}}113\\ 4\end{tabular} & \multicolumn{1}{l|}{\begin{tabular}[c]{@{}l@{}}91\\ 5\end{tabular}} & \multicolumn{1}{l|}{\begin{tabular}[c]{@{}l@{}}95\\ 3\end{tabular}} & \begin{tabular}[c]{@{}l@{}}104\\ 3\end{tabular} \\ \hline

\end{tabular}
\end{table}

    For $m=5$, we observe less advantage from using the accelerated methods. For {\tt Si10H16} and the smaller dimension problems, the accelerated methods converge similarly or worse than the base method. However, the accelerated methods still perform well for the four-barbell problem and for {\tt Ge99H100}. For both problems, the Depth-1 and Nesterov-like methods performed best on average with $\beta=0.1$ and converged in around $87\%$ of the base method's total iterations, while the heavy-ball-like method performed best on average with $\beta=0.1$ and converged in around $82\%$ of the base method's total iterations.

    \subsection{Example 3}

    For our final example, we use the negation of the $61349\times 61349$ matrix {\tt GaAsH6} from the SparseSuite collection with $B=I$ and solve for the two smallest eigenvalues with a tolerance of $10^{-10}$. The distance between the smallest eigenvalue and second smallest eigenvalue is significantly small at $3.18e^{-12}$. We use the same random initial iterate for each method and use $\beta=0.1$ seen for the fixed methods and $\beta_0=0.1$ for the safeguarded methods. We declare convergence when the residuals for both approximate eigenpairs are less than Results are shown in Figure 12.

    \begin{figure}
        \centering
        \includegraphics[width=0.49\textwidth]{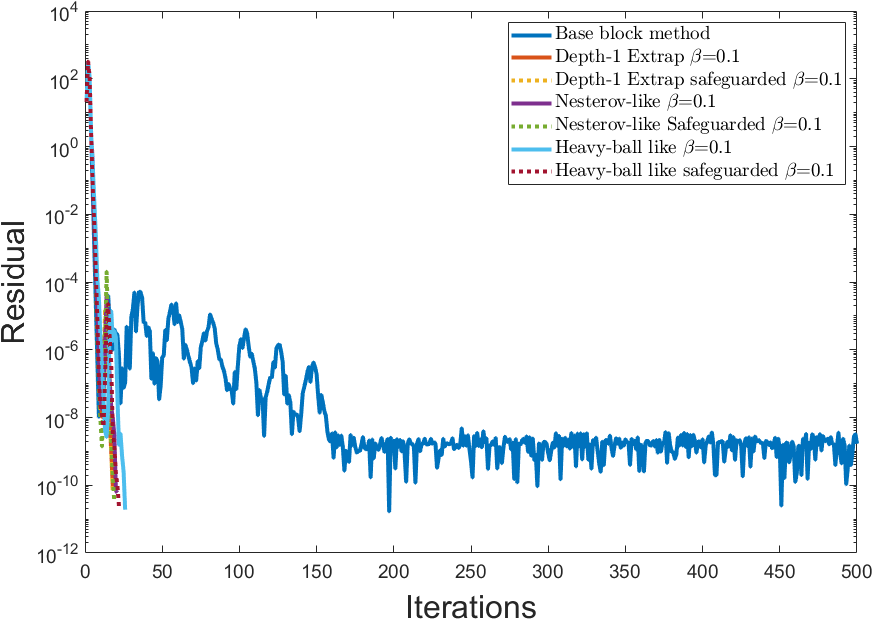}
        \includegraphics[width=0.49\textwidth]{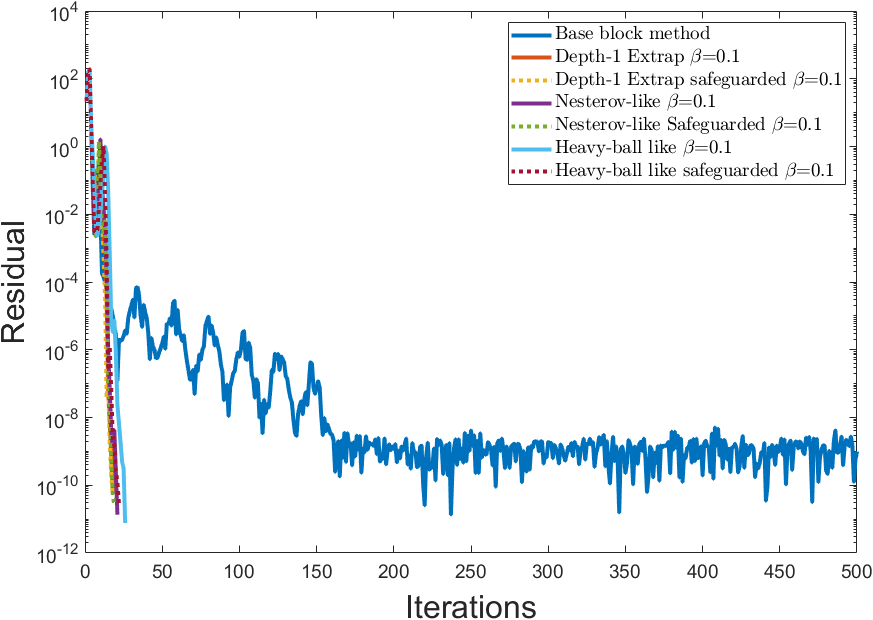}
        \includegraphics[width=0.49\textwidth]{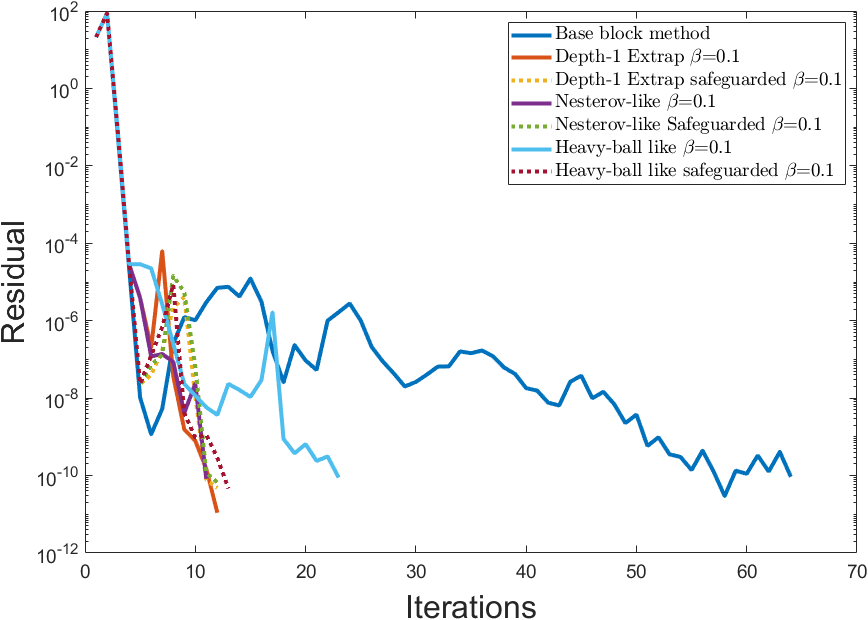}
        \includegraphics[width=0.49\textwidth]{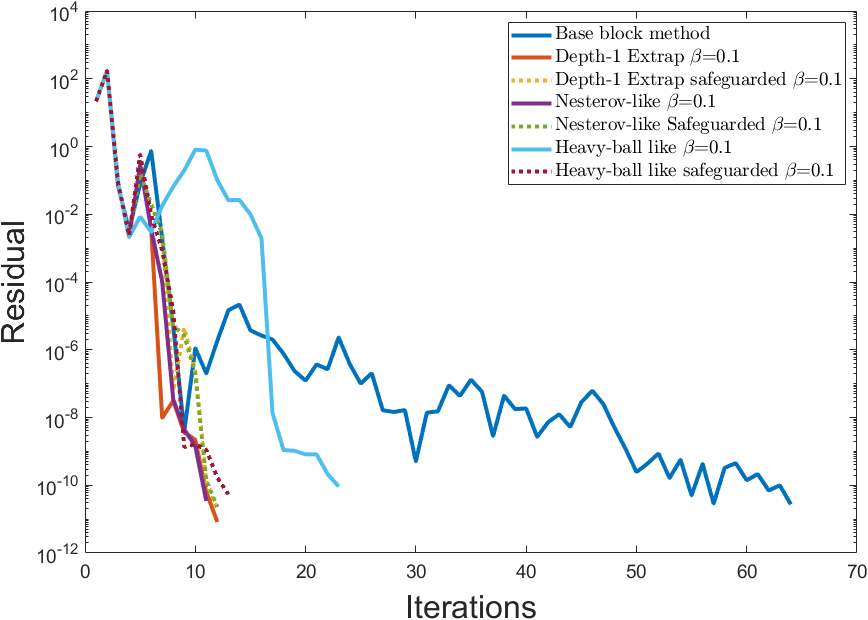}
        \caption{Residual convergence solving for the two smallest eigenvalues. Top left: $m=1$, $\lambda_1$, top right: $m=1$, $\lambda_2$, bottom left: $m=2$, $\lambda_1$, bottom right: $m=2$, $\lambda_2$. For $m=1$, the base method is equivalent to LOBPCG.}
    \end{figure}

    We observed the following. For $m=1$, where the base method is equivalent to LOBPCG, the accelerated methods performed better on average than the base method. For 50 trials, the base method converged in an average of 80 iterations with a standard deviation of 71. In 10 of the 50 trials, the base method failed to converge due to the noise regime, as shown in Figure 12. The Depth-1 and Nesterov-like methods again performed similarly to each other, converging in 42 and 30 iterations on average respectively with standard deviations 56 and 44 respectively. In five of the 50 trials, the fixed Depth-1 method failed to converge due to the noise regime, and in three of the 50 trials, the fixed Nesterov-like method failed to converge due to the noise regime, with the safeguarded methods behaving similarly. The fixed heavy-ball method usually performed slightly worse than the other fixed methods, but was much more stable with an average total of 26 iterations with a standard deviation of 3. The safeguarded heavy-ball method performed best with an average of 21 iterations and standard deviation of 2. The heavy-ball method never failed to converge for all observed trials.

    For $m=2$, we observed similar results with the base method converging on average in 76 iterations with a standard deviation of 50, and failing to converge once. The accelerated methods never failed to converge for all observed trials. The fixed Depth-1 and Nesterov-like methods both converged on average in 10 iterations with a standard deviation of 1, or $13\%$ of the base methods total iterations, with their respective adaptive safeguarded methods performing similarly. The fixed heavy-ball method converged on average in 21 iterations with standard deviation of 2, or $28\%$ of the base methods total iterations, while the safeguard adaptive heavy-ball method performed more closely to the other accelerated methods, converging on average in $17\%$ of the base methods total iterations. This can be seen in the bottom row of Figure 12.
    
\section{Conclusion}

    We provided an explanation for the acceleration seen when implementing Polyak's heavy-ball method
    on eigensolvers based on Krylov subspace projection methods. We introduced three acceleration schemes for the inverse-free Krylov subspace method and its block version based on the Nesterov and heavy-ball methods. A convergence theory has been shown further generalizing the choice of subspace for the generic eigensolver scheme described by Quillen and Ye, including for the three methods introduced in this paper, and analysis of extrapolation has been shown to justify the expected decrease in total iterations from the Krylov subspace method for the three schemes. In practice, the accelerated schemes show faster convergence when using small subspace sizes for a small cost in computation and storage. Convergence is significantly faster when solving for multiple heavily clustered eigenvalues. In the future, we plan to implement the deflation of found eigenvectors, a feature available in the black box implementation of the block inverse-free Krylov subspace method provided by Ye. We also plan to study how preconditioning affects the accelerated methods. 

\section*{Acknowledgments}

    Michelle Baker and Sara Pollock are supported in part by NSF DMS-2045059 (CAREER). Michelle Baker would like to thank Qiang Ye for his insight and guidance.

\bibliographystyle{plain}
\bibliography{bib}

\end{document}